\numberwithin{equation}{section}
\theoremstyle{definition}
\newtheorem{definition}{Definition}[section]
\theoremstyle{plain}
\newtheorem{proposition}[definition]{Proposition}
\newtheorem{lemma}[definition]{Lemma}
\newtheorem{theorem}[definition]{Theorem}
\newtheorem{corollary}[definition]{Corollary}
\theoremstyle{remark}
\begin{document}

\newcommand{\sheaf}[1]{\ensuremath{\mathcal{#1}}}
\newcommand{\maxid}{{\mathop{\mathfrak{m}}}}
\newcommand{\depth}[1]{\mathop{\mathrm{depth}}\nolimits _{#1}}
\newcommand{\image}{{\mathop{\mathrm{Im\,}}}}
\newcommand{\simage}{{\mathop{\mathrm{Im\,}}}}
\newcommand{\ann}{{\mathop{\mathrm{ann\,}}}}
\newcommand{\sker}{{\mathop{\mathrm{ker\,}}}}
\newcommand{\coker}{{\mathop{\mathrm{coker\,}}}}
\newcommand{\scoker}{{\mathop{\mathrm{coker\,}}}}
\newcommand{\id}{{\mathop{\mathrm{id}}}}
\newcommand{\supp}{\mathop{\mathrm{supp}}}

\newcommand{\Hom}{{\mathop{\mathrm{Hom}}}}
\newcommand{\shom}{{\mathop{\mathbf{Hom}}}}
\newcommand{\shomo}{{\mathop{\mathbf{Hom}_\sheaf{O}}}}
\newcommand{\shomp}{{\mathop{\mathbf{Hom}_\mathrm{plain}}}}
\newcommand{\shommin}{{\mathop{\mathbf{Hom}_\mathrm{min}}}}

\title{Homomorphisms of infinitely generated analytic sheaves}

\author{Vakhid Masagutov}
\address{
	Department of Mathematics, Purdue University \\
	West Lafayette, IN 47907, USA
	}
\email{
	vmasagut@math.purdue.edu
	}

\thanks{
Research partially supported by NSF grant DMS0700281
and the Mittag-Leffler Institute, Stockholm. I am grateful to both
organizations.  I would like to express my gratitude to the 
Mittag-Leffler Institute for their hospitality during the preparation of this paper. 
I am also very grateful to Professor Lempert
for proposing questions that led to this work and for his
invaluable suggestions and critical remarks. 
}

\begin{abstract}
We prove that every homomorphism 
$\mathcal{O}^E_\zeta\rightarrow\mathcal{O}^F_\zeta$, 
with $E$ and $F$ Banach spaces and $\zeta\in\mathbb{C}^m$, is induced 
by a $\mathop{\mathrm{Hom}}(E,F)$-valued holomorphic germ, provided that $1\leq m<\infty$. 
A similar 
structure theorem is obtained for the homomorphisms
of type $\mathcal{O}^E_\zeta\rightarrow\mathcal{S}_\zeta$, where $\mathcal{S}_\zeta$ is
a stalk of a coherent sheaf of positive 
$\mathfrak{m}_\zeta$-depth.
We later extend these results to sheaf homomorphisms,
 obtaining a  condition on coherent sheaves
 which guarantees the sheaf to be equipped with a unique analytic structure in the sense of Lempert-Patyi.
\end{abstract}

\subjclass{32L10 \and 32C35 \and 13C15}
\keywords{Infinitely Generated Sheaves \and Sheaf Homomorphisms \and  Depth}

\maketitle

\section{Introduction}
The theory of coherent sheaves is one of the deeper and most developed subjects
 in complex analysis and geometry, see \cite{GR}.
Coherent sheaves are locally finitely generated. However,
a number of problems even in finite dimensional geometry leads to sheaves
that are not finitely generated over the structure sheaf $\sheaf{O}$,
such as the sheaf of holomorphic germs valued in a Banach space;
and in infinite dimensional problems infinitely generated sheaves are the rule rather
than the exception. This paper is motivated by \cite{LP},
that introduced and studied the class of
so called cohesive sheaves over Banach spaces; but here we shall almost exclusively
deal with sheaves over $\mathbb{C}^m$. In a nutshell, we show that 
\sheaf{O}-homomorphisms among certain sheaves of \sheaf{O}-modules
have strong continuity properties, and in fact arise by a simple construction.

We will consider two types of sheaves. The first type consists of 
coherent sheaves. The other consists of plain sheaves; these are
the sheaves $\sheaf{O}^E$ of holomorphic germs valued in some fixed
complex Banach space $E$. The base of the sheaves is 
$\mathbb{C}^m$ or an open $\Omega\subset\mathbb{C}^m$.
Thus $\sheaf{O}^E$ is a (sheaf of) \sheaf{O}-module(s).
We denote the Banach space of continuous linear operators between
Banach space $E$ and $F$ by $\Hom(E,F)$.
Any holomorphic map $\Phi:\Omega\rightarrow\Hom(E,F)$ induces
an \sheaf{O}-homomorphism $\phi:\sheaf{O}^E\rightarrow\sheaf{O}^F$.
If $U\subset\Omega$ is open, $\zeta\in U$ and a holomorphic 
$e:U\rightarrow E$ represents a germ $e_\zeta\in\sheaf{O}^E_\zeta$,
then $\phi(e)\in\sheaf{O}^{F}_\zeta$ is defined as the germ of the function
$U\ni z\mapsto \Phi(z)e(z)\in E$.
Following \cite{LP}, such homomorphisms will be called plain.
In fact, if $\Phi$ is holomorphic only on
some neighborhood of $\zeta$ it still defines a homomorphism 
$\sheaf{O}^E_\zeta\rightarrow\sheaf{O}^F_\zeta$ of the local modules over
the local ring $\sheaf{O}_\zeta$. Again such homomorphisms will be called
plain.

The first question we address is a how restrictive it is for a homomorphism
to be plain. It turns out it is not restrictive at all, provided
$0<m<\infty$.
\begin{theorem}\label{thm_plain}
If $0<m<\infty$ and $\Omega\subset\mathbb{C}^m$ is open, then every $\sheaf{O}$-homomorphism $\sheaf{O}^E\rightarrow\sheaf{O}^F$ of plain sheaves is plain.
\end{theorem}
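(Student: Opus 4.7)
The plan is first to reduce the theorem to the stalkwise assertion that every $\sheaf{O}_\zeta$-linear map $\phi_\zeta:\sheaf{O}^E_\zeta\to\sheaf{O}^F_\zeta$ is induced by some $\Phi_\zeta\in\sheaf{O}^{\Hom(E,F)}_\zeta$.  An inducing germ is unique: its value at $z$ must satisfy $\Phi_\zeta(z)e=\phi_\zeta(\underline e)(z)$ for every $e\in E$, where $\underline e$ denotes the germ of the constant function with value $e$, and two holomorphic $\Hom(E,F)$-valued germs that coincide as operators on a common neighborhood are equal.  Uniqueness produces compatibility on overlaps, so the germs $\Phi_\zeta$ glue to a global section $\Phi\in\sheaf{O}^{\Hom(E,F)}(\Omega)$ inducing $\phi$.

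Fix $\zeta=0$ and expand $\phi(\underline e)=\sum_\alpha b_\alpha(e)\,z^\alpha$ with $b_\alpha(e)\in F$.  The maps $b_\alpha:E\to F$ are $\mathbb{C}$-linear.  The heart of the proof is the uniform estimate
\[
\|b_\alpha(e)\|_F\le M\,r^{-|\alpha|}\|e\|_E
\]
for some constants $r>0$, $M<\infty$ and all $e\in E$, $\alpha\in\mathbb{N}^m$.  Granted this, $b_\alpha\in\Hom(E,F)$ with $\|b_\alpha\|\le Mr^{-|\alpha|}$, so the series $\Phi(z):=\sum_\alpha b_\alpha\,z^\alpha$ converges in $\Hom(E,F)$ for $|z|$ small, yielding an element of $\sheaf{O}^{\Hom(E,F)}_0$.

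The uniform estimate I would extract from a Baire category argument in the Banach space $E$.  Writing $\mathcal{H}^F_r$ for the Banach space of continuous maps $\overline{D_r}\to F$ holomorphic in the interior, consider
\[
E_{n,C}:=\bigl\{e\in E:\phi(\underline e)\text{ has a representative in }\mathcal{H}^F_{1/n}\text{ of sup-norm}\le C\bigr\}.
\]
These cover $E$ as $(n,C)$ ranges over $\mathbb{N}\times\mathbb{N}$, since every germ in $\sheaf{O}^F_0$ is defined and bounded on some polydisk.  If each $E_{n,C}$ is closed in $E$, Baire yields one with nonempty interior, and then balancedness and linearity combined with Cauchy's inequalities produce the required bounds on the $b_\alpha$.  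The main obstacle is verifying sequential closedness of $E_{n,C}$: given $e_k\to e$ in $E$ with representatives $g_k\in\mathcal{H}^F_{1/n}$ of $\phi(\underline{e_k})$ bounded by $C$, one must produce such a representative of $\phi(\underline e)$.  No Montel-type extraction is available directly because $\dim F$ may be infinite, so I would exploit the $\sheaf{O}_0$-linearity by applying $\phi$ to an auxiliary germ in $\sheaf{O}^E_0$ encoding the whole sequence $e_k-e$---for instance, a germ of the form $\sum_k(e_k-e)\,h_k$ with scalar $h_k\in\maxid$ whose orders of vanishing grow rapidly enough to separate Taylor-coefficient blocks---and using the identity $\phi(f)\equiv\phi(p_N)\pmod{\maxid^{N+1}\sheaf{O}^F_0}$ for Taylor polynomials $p_N$, together with the automatic growth bounds on Taylor coefficients of elements of $\sheaf{O}^F_0$, to force $g_k$ to converge to a representative of $\phi(\underline e)$.

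Once the uniform estimate is in hand and $\Phi\in\sheaf{O}^{\Hom(E,F)}_0$ is produced, it remains to verify that the induced plain homomorphism $\Phi_\ast$ coincides with $\phi$.  Both are $\sheaf{O}_0$-linear and agree on every constant germ $\underline e$ by construction of $\Phi$, hence on every $E$-valued polynomial.  For an arbitrary $f\in\sheaf{O}^E_0$ with $N$-th Taylor polynomial $p_N$, we have $f-p_N\in\maxid^{N+1}\sheaf{O}^E_0$ and consequently $\Phi_\ast(f)-\phi(f)\in\maxid^{N+1}\sheaf{O}^F_0$ for every $N$.  The separatedness $\bigcap_N\maxid^{N+1}\sheaf{O}^F_0=0$, valid because an $F$-valued holomorphic germ whose Taylor series vanishes is itself zero, then forces $\Phi_\ast=\phi$.
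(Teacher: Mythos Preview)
Your overall architecture is sound and matches the paper's: reduce to stalks, build $\Phi$ from the Taylor coefficients of $\phi(\underline e)$, and then use $\bigcap_N \maxid^{N+1}\sheaf{O}^F_0=0$ to upgrade ``agrees on constants'' to ``agrees everywhere'' (this is exactly the paper's Lemma~3.4). The gluing of the local $\Phi_\zeta$'s to a global $\Phi$ is also fine.

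The genuine gap is in the uniform estimate. You run Baire in the Banach space $E$ using the sets $E_{n,C}$, and you correctly isolate closedness of $E_{n,C}$ as the crux. But that closedness is essentially what you are trying to prove: given $e_k\to e$ with each $\phi(\underline{e_k})$ extending boundedly to a common polydisk, controlling $\phi(\underline e)$ already requires some continuity of $e\mapsto b_\alpha(e)$. Your proposed fix---packaging the sequence into a single germ $\sum_k(e_k-e)h_k$ and reading off information from its image under $\phi$---does not deliver this as written. When you expand, each Taylor coefficient of $\phi\bigl(\sum_k(e_k-e)h_k\bigr)$ is a finite sum that mixes the \emph{unknown} quantities $b_\alpha(e)$ with the known $b_\alpha(e_k)$; the single growth bound coming from membership in $\sheaf{O}^F_0$ gives one inequality per coefficient, and there is no mechanism here to disentangle the $b_\alpha(e)$'s from those sums. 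So the argument is circular at this point.

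The paper sidesteps this by reversing the two steps. It first proves, for each fixed $x$ and each degree $n$, that the linear map $v\mapsto P_n(x,v)$ (the $n$-th homogeneous component of $\phi(\underline v)$ at $x$) is continuous. This is a direct contradiction argument: assuming $P_k(x,\cdot)$ is unbounded, one inductively picks unit vectors $v_i$ so that $\|P_k(x,v_{j-k})\|$ dominates all other contributions, forms the convergent test germ $e=\sum_i v_i\lambda^i$ with $\lambda\in X^*$, $\lambda(x)=1$, and shows that the homogeneous components $Q_j(x)$ of $\phi(e)$ satisfy $\|Q_j(x)\|>j^j$, which is incompatible with $\phi(e)\in\sheaf{O}^F_0$. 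Only \emph{after} each $P_n(x,\cdot)\in\Hom(E,F)$ is in hand does the paper invoke Uniform Boundedness and a Baire argument---now in the base $X$, where the relevant sets are manifestly closed because the $\pi_n$ are already known to be continuous---to obtain a common radius of convergence. Your instinct to use an auxiliary germ is exactly the right one; it just needs to be aimed at proving continuity of each $b_\alpha$ directly, rather than at the closedness of $E_{n,C}$.
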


This came as a surprise, because it fails in the simplest of all cases, when
$m=0$. This was pointed out by Lempert.
When $\Omega=\mathbb{C}^0=\{0\}$, $\sheaf{O}^E$, resp. $\sheaf{O}^F$,
are identified with $E$ and $F$, and the difference between 
$\sheaf{O}$-homomorphism and plain homomorphism boils down to the 
difference between linear and continuous linear operators $E\rightarrow F$.
It would be interesting to decide whether Theorem \ref{thm_plain} 
remains true if $\mathbb{C}^m$ is replaced by a Banach space.

Lempert observed that a variant of the original proof
of Theorem \ref{thm_plain} gives the corresponding theorem about
local modules, and we shall derive Theorem \ref{thm_plain} from it:
\begin{theorem}\label{thm_local_modules}
If $0<m<\infty$ and $\zeta\in\mathbb{C}^m$, then every
$\sheaf{O}_\zeta$-homomorphism of plain modules 
$\sheaf{O}^E_\zeta\rightarrow\sheaf{O}^F_\zeta$ is plain.
\end{theorem}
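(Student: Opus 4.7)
My plan is to assume $\zeta=0$ without loss of generality, read off the candidate holomorphic germ $\Phi\colon(\mathbb{C}^m,0)\to\Hom(E,F)$ from the action of $\phi$ on constant germs, verify that it converges, and then identify the plain homomorphism it induces with $\phi$.

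\textbf{Taylor coefficients.} For $e\in E$, interpreted as a constant germ in $\sheaf{O}^E_0$, the germ $\phi(e)\in\sheaf{O}^F_0$ has a unique Taylor expansion
\[
\phi(e)(z)=\sum_{\beta\in\mathbb{N}^m}\phi_\beta(e)\,z^\beta,\qquad \phi_\beta(e)\in F,
\]
and each $\phi_\beta\colon E\to F$ is $\mathbb{C}$-linear. If $\phi$ were induced by some germ $\Phi=\sum_\beta\Phi_\beta z^\beta$, one would be forced to take $\Phi_\beta=\phi_\beta$. So my task breaks into (i) showing $\phi_\beta\in\Hom(E,F)$ for every $\beta$, (ii) showing that $\sum_\beta\phi_\beta z^\beta$ converges in $\Hom(E,F)$ on some polydisc, and (iii) showing that the plain homomorphism $\tilde\phi$ it induces equals $\phi$.

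\textbf{Baire estimates.} For positive integers $n,k$, set
\[
V_{n,k}=\bigl\{e\in E:\phi(e)\text{ has a holomorphic representative on }D_{1/n}\text{ with sup norm }\le k\bigr\},
\]
where $D_{r}=\{z\in\mathbb{C}^m:|z_j|<r\}$. Linearity of $\phi$ makes each $V_{n,k}$ convex and balanced, and every germ is bounded on a sufficiently small polydisc, so $E=\bigcup_{n,k}V_{n,k}$. Baire's theorem in the Banach space $E$ provides some non-meager $V_{n_0,k_0}$; its closure is then convex, balanced and has non-empty interior, hence contains a ball $B_E(0,r)$. A Pettis-type argument, together with a limit/extraction step using that the representatives $\phi(e_j)$ form a bounded family in $H^\infty(D_{1/n_0},F)$, transfers the sup-norm bound to every $e\in B_E(0,r)$. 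Cauchy estimates then yield
\[
\|\phi_\beta\|_{\Hom(E,F)}\le\frac{k_0}{r}\,n_0^{|\beta|}\qquad(\beta\in\mathbb{N}^m),
\]
which proves (i) and delivers (ii): $\Phi(z)=\sum_\beta\phi_\beta z^\beta$ converges in $\Hom(E,F)$ on $D_{1/n_0}$ and defines a holomorphic germ.

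\textbf{Identifying $\phi$.} Let $\tilde\phi$ be the plain homomorphism induced by $\Phi$. By construction $\phi$ and $\tilde\phi$ agree on constants $E\subset\sheaf{O}^E_0$, and by $\sheaf{O}_0$-linearity they agree on all polynomial germs in $\mathbb{C}[z]\otimes_{\mathbb{C}}E$. For an arbitrary $f\in\sheaf{O}^E_0$, Hadamard's lemma yields $f=P_k+R_k$ with $P_k$ a polynomial of degree less than $k$ and $R_k\in\maxid_0^{k}\sheaf{O}^E_0$. Both $\phi$ and $\tilde\phi$ preserve the $\maxid_0$-adic filtration, so
\[
\phi(f)-\tilde\phi(f)=\phi(R_k)-\tilde\phi(R_k)\in\maxid_0^{k}\sheaf{O}^F_0
\]
for every $k$, and the Krull-type intersection $\bigcap_k\maxid_0^{k}\sheaf{O}^F_0=0$ (a convergent $F$-valued power series vanishing to infinite order at $0$ is zero) forces $\phi=\tilde\phi$.

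\textbf{Main obstacle.} The delicate step is turning the algebraic exhaustion $E=\bigcup V_{n,k}$ into a genuine quantitative bound on $\phi_\beta$. Because $\phi$ carries no a priori continuity and the sets $V_{n,k}$ need not be closed or Borel in any obvious sense, standard closed-graph or uniform-boundedness tools do not apply directly, and the usual closed-graph approach to $\phi_\beta$ is circular. Exploiting the linearity of $\phi$ (to get convexity and balancedness of the $V_{n,k}$), together with a Baire-property or Pettis-style argument justifying that the sup-norm bound propagates from $V_{n_0,k_0}$ to a whole ball in $E$, is the core technical content; this is also the step where the hypothesis $m\ge 1$ enters essentially, since it provides the non-trivial maximal ideal $\maxid_0$ through which the module structure interacts with Cauchy estimates.
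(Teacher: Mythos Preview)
Your step (iii) is correct and matches the paper (Lemma~3.4 there): once a plain $\tilde\phi$ agrees with $\phi$ on constants, the $\mathfrak m_0$-adic argument forces $\phi=\tilde\phi$. The problem is step (i)--(ii), the Baire/Pettis part.

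The sets $V_{n,k}$ are convex, balanced, and cover $E$, and Baire does give you that $\overline{V_{n_0,k_0}}$ contains a ball $B_E(0,r)$. But you have \emph{no way} to pass from $e\in\overline{V_{n_0,k_0}}$ to any bound on $\phi(e)$. Your ``limit/extraction step'' would need: given $e_j\to e$ in $E$ with $\phi(e_j)$ bounded in $H^\infty(D_{1/n_0},F)$, conclude something about $\phi(e)$. That is precisely a continuity statement for $\phi$ on constants, which is what you are trying to prove. Compactness in $H^\infty(D_{1/n_0},F)$ is unavailable for infinite-dimensional $F$, and even if a subsequential limit $g$ existed you could not identify $g$ with $\phi(e)$. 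Pettis' lemma does not help either: it requires the Baire property, and there is no reason the preimage of anything under the purely algebraic map $e\mapsto\phi(e)$ has it.

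A clean diagnostic: your Baire argument never uses $m\ge 1$ or the $\mathcal O_0$-module structure (you invoke those only in step (iii)). Run it verbatim for $m=0$: then $V_{n,k}=\{e:\|\phi(e)\|\le k\}$, and the identical reasoning would ``show'' every linear map $E\to F$ is bounded, which is false. So the gap is real, not cosmetic.

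The paper closes exactly this gap by exploiting the module structure \emph{before} any Baire step: it feeds $\phi$ germs of the form $\sum_i v_i\,\lambda^i$ with $\lambda\in\Hom(\mathbb C^m,\mathbb C)$ (this is where $m\ge 1$ enters), reads off the homogeneous components of $\phi(e)$, and from the fact that these must define a convergent germ extracts, by an inductive contradiction, that each $P_n(x,\cdot)$ is continuous. Only after continuity is established does a Baire-type argument (Proposition~3.2) produce uniform bounds and convergence of $\Phi$. You should replace your Baire/Pettis sketch by an argument of this kind that genuinely uses $\mathcal O_0$-linearity on non-constant germs.
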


Next we turn to the structure of \sheaf{O}-homomorphisms from plain
sheaves $\sheaf{O}^E$ to a coherent sheaf $\sheaf{S}$.
On the level of stalks, such homomorphisms have a simple description;
however, this description applies only if the depth of the stalk 
$\sheaf{S}_\zeta$ is positive, a condition that corresponds to the
positivity of $m$ in Theorems \ref{thm_plain} and \ref{thm_local_modules}.
For our purposes we can define depth as follows.
Let $\maxid_\zeta\subset\sheaf{O}_\zeta$ denote the maximal
ideal consisting of germs that vanish at $\zeta$; assume $\maxid_\zeta\not=0$. For a finitely generated
$\sheaf{O}_\zeta$-module $M$, $\depth{\maxid_\zeta} M=0$ if
$M$ has a nonzero submodule $N$ such that $\maxid_\zeta N=0$
(see Definition \ref{defn_depth} and Proposition 
\ref{prop_depth_lemmas_depth_better}).
Otherwise $\depth{\maxid_\zeta} M>0$.
\begin{theorem}\label{thm_coh_stalk}
Let $\zeta\in\mathbb{C}^m$, $M$ a finite $\sheaf{O}_\zeta$-module
with $\depth{\maxid_\zeta} M>0$, and $p:\sheaf{O}^n_\zeta\rightarrow M$
an epimorphism. Then, any $\sheaf{O}_\zeta$-homomorphism
$\phi:\sheaf{O}_\zeta^E\rightarrow M$ factors through $p$, i.e., $\phi=p\psi$
with an $\sheaf{O}_\zeta$-homomorphism $\psi:\sheaf{O}_\zeta^E\rightarrow \sheaf{O}^n_\zeta$.
\end{theorem}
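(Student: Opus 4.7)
My plan is to reduce the lifting problem to a holomorphic matrix equation via an embedding of $M$ into a free $\sheaf{O}_\zeta$-module, to which Theorem~\ref{thm_local_modules} can be applied, and then to solve the resulting equation by a combination of formal construction and convergence control. The depth hypothesis enters decisively by making the embedding possible.

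Because $\depth{\maxid_\zeta}M>0$ precludes a nonzero submodule of $M$ annihilated by $\maxid_\zeta$ and $\sheaf{O}_\zeta$ is a Noetherian integral domain, $M$ is torsion-free. Tensoring with the field of fractions and clearing denominators on a finite set of generators yields an injection $i:M\hookrightarrow\sheaf{O}_\zeta^r$ for some $r$. The compositions
$$X:=i\circ\phi:\sheaf{O}_\zeta^E\to\sheaf{O}_\zeta^r,\qquad Q:=i\circ p:\sheaf{O}_\zeta^n\to\sheaf{O}_\zeta^r$$
are $\sheaf{O}_\zeta$-homomorphisms between plain modules and so, by Theorem~\ref{thm_local_modules}, are themselves plain, induced by a holomorphic germ $X(z)\in\Hom(E,\mathbb{C}^r)$ and an $r\times n$ holomorphic matrix germ $Q(z)$ on some neighborhood of $\zeta$.

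Any $\sheaf{O}_\zeta$-homomorphism $\psi:\sheaf{O}_\zeta^E\to\sheaf{O}_\zeta^n$ is also automatically plain, given by a germ $\Psi(z)\in\Hom(E,\mathbb{C}^n)$, and the injectivity of $i$ makes $p\psi=\phi$ equivalent to the matrix equation $Q(z)\Psi(z)=X(z)$ near $\zeta$. The content of the hypothesis translates into the statement that, for each $v\in E$, the germ $z\mapsto X(z)v$ lies in the image subsheaf $Q(\sheaf{O}_\zeta^n)=i(M)\subset\sheaf{O}_\zeta^r$, so that a solution $\Psi(v)\in\sheaf{O}_\zeta^n$ of the equation exists for each individual $v$.

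To produce a single holomorphic $\Psi$ that works simultaneously in all $v$, I would expand $X$, $Q$, and the unknown $\Psi$ in Taylor series around $\zeta$, obtaining the triangular system $\sum_\gamma Q_{\alpha-\gamma}\Psi_\gamma=X_\alpha$ for continuous linear maps $\Psi_\alpha:E\to\mathbb{C}^n$. At each order the system is solvable on the associated graded level, since $\mathrm{gr}(\sheaf{O}_\zeta^E)=E\otimes_\mathbb{C}\mathbb{C}[z_1,\ldots,z_m]$ is free over $\mathbb{C}[z_1,\ldots,z_m]$. The hard part is convergence: even though each $\Psi_\alpha$ can be chosen algebraically, assembling them into a convergent $\Hom(E,\mathbb{C}^n)$-valued germ on a common neighborhood of $\zeta$ requires Cauchy-type estimates extracted from the plain structures of $X$ and $Q$. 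Without the positive depth of $M$ the embedding $i$ need not exist, $X$ would fail to be plain, and the whole reduction collapses, in parallel with the pathology that forces $m>0$ in Theorem~\ref{thm_plain}.
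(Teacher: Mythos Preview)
The first step of your reduction is false: positive $\maxid_\zeta$-depth does \emph{not} force $M$ to be torsion-free, and hence the embedding $i:M\hookrightarrow\sheaf{O}_\zeta^r$ need not exist. For a concrete counterexample take $m=2$, $\zeta=0$, and $M=\sheaf{O}_0/(z_1)$. Here $z_1$ annihilates all of $M$, so $M$ is torsion, yet $z_2\in\maxid_0$ acts as a nonzerodivisor on $M$, so $\depth{\maxid_0}M=1>0$. Since a submodule of a free module over the domain $\sheaf{O}_0$ is automatically torsion-free, no injection $i$ can exist for this $M$, and your composite $X=i\circ\phi$ is never defined. The implication you use (``no nonzero submodule killed by $\maxid_\zeta$'' $\Rightarrow$ ``no nonzero element killed by any nonzero germ'') simply does not hold once $m\geq 2$.

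This is not a cosmetic gap: the torsion case is the heart of the matter. The paper's proof proceeds by induction on $m$ and treats torsion modules \emph{first}, picking a nonzero annihilator $h$, normalizing it to a Weierstrass polynomial, and using the division theorem to identify $\sheaf{O}_0^E/h\sheaf{O}_0^E$ with a plain $\sheaf{O}'_0$-module in one fewer variable (this is where the estimate controlling convergence of the constructed $\psi$ actually comes from). Only afterwards is a general $M$ reduced to a torsion cokernel via an auxiliary map $\theta$ (Lemma~\ref{lemma_depth_lemmas_reduce_to_torsion}). Your outline, even in the torsion-free case where $i$ does exist, leaves the convergence of the formal $\Psi$ entirely unaddressed; the paper obtains the needed Cauchy-type bound \eqref{eq_proof_bdd} precisely from the inductive hypothesis together with Theorem~\ref{thm_plain}, and there is no obvious substitute in your framework.
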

Since our depth condition eliminates the possibility of a nonzero sheaf when $m=0$, the above  $\psi$
is then induced by a germ in $\sheaf{O}_\zeta^{\Hom(E,F)}$.
The depth condition is in fact necessary as shown in Theorem 
\ref{thm_necessary}.

A global version of the theorem concerning epimorphisms 
$p:\sheaf{O}^n\rightarrow\sheaf{S}$ on a coherent sheaf, also holds,
but we shall discuss it only in Section 8,
since it depends on result of Lempert that has not yet been published.
Theorem \ref{thm_coh_stalk} can be recast in the language of
analytic structure on sheaves, as defined in \cite{LP};
it says that analytic structures of coherent sheaves are unique.
This will be explained in Section 7, along with the following corollary
of Theorem \ref{thm_local_modules}:
\begin{corollary}\label{cor_non_free}
If $\zeta\in\mathbb{C}^m$, $0<m<\infty$, and $E$ is an infinite dimensional Banach space, then the plain module $\sheaf{O}^E_\zeta$ is not free;
it cannot even be embedded in a free module.
\end{corollary}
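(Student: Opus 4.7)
The plan is to argue by contradiction: suppose $\iota : \sheaf{O}^E_\zeta \hookrightarrow F$ is an $\sheaf{O}_\zeta$-embedding with $F = \bigoplus_{b \in B} \sheaf{O}_\zeta$ free. The strategy is first to reduce, via Baire category, to the case in which the target has finite rank, and then to exploit Noetherianity of $\sheaf{O}_\zeta$ together with Hadamard's lemma to contradict the infinite dimensionality of $E$.

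For the reduction, compose $\iota$ with the coordinate projections $\pi_b : F \to \sheaf{O}_\zeta$; by Theorem \ref{thm_local_modules}, each $\pi_b \iota$ is plain, induced by a germ $\Phi_b$ of holomorphic $E^*$-valued maps at $\zeta$. Set $K_b := \bigcap_{\alpha \in \mathbb{N}^m} \ker \Phi_b^{(\alpha)}(\zeta) \subset E$, a closed linear subspace. For a constant germ $e \in E$, the condition that $\iota(e) \in F$ has finite support translates into: $e \in K_b$ for all but finitely many $b$. Now a Baire argument forces the set $T := \{b : K_b \neq E\}$ to be finite: otherwise, pick distinct $b_1, b_2, \ldots \in T$, set $W_N := \bigcap_{i > N} K_{b_i}$, and observe $E = \bigcup_N W_N$ by the previous sentence, so some $W_{N_0}$ must contain an interior point by Baire, whence $W_{N_0} = E$---contradicting $K_{b_j} \neq E$ for $j > N_0$. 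Therefore $\Phi_b = 0$ whenever $b \notin T$, and $\iota$ takes values in the finite rank free submodule $\bigoplus_{b \in T} \sheaf{O}_\zeta \cong \sheaf{O}_\zeta^{|T|}$.

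For the contradiction, recall that $\sheaf{O}_\zeta$ is a Noetherian (regular local) ring, so every submodule of the finitely generated module $\sheaf{O}_\zeta^{|T|}$ is itself finitely generated. Hence $\sheaf{O}^E_\zeta \cong \iota(\sheaf{O}^E_\zeta)$ is a finitely generated $\sheaf{O}_\zeta$-module, and consequently $\sheaf{O}^E_\zeta / \maxid_\zeta \sheaf{O}^E_\zeta$ is a finite dimensional $\mathbb{C}$-vector space. On the other hand, the multivariable Hadamard lemma shows that $\maxid_\zeta \sheaf{O}^E_\zeta = \{f \in \sheaf{O}^E_\zeta : f(\zeta) = 0\}$, so evaluation at $\zeta$ identifies $\sheaf{O}^E_\zeta / \maxid_\zeta \sheaf{O}^E_\zeta$ with $E$. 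This forces $E$ to be finite dimensional, contrary to hypothesis. The main obstacle is the Baire reduction itself: the finite support condition is only available coordinate by coordinate at the outset, and turning it into a uniform statement about the essential index set $T$ is where the real work lies; once the target is finitely generated, Noetherianity and Hadamard close the argument immediately.
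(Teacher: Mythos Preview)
Your argument is correct, but it is structurally quite different from the paper's. Both proofs invoke Theorem~\ref{thm_local_modules} as the essential analytic input, but they exploit it in opposite directions. The paper argues on the free side: it shows (Proposition~\ref{prop_applications_seq}) that if $\sheaf{O}^E_\zeta$ sat inside a free module $M$, one could build an $\sheaf{O}_\zeta$-homomorphism $M\to\sheaf{O}_\zeta$ whose values at $\zeta$ on a fixed sequence of constant unit-vector germs $e_\nu$ are prescribed arbitrarily (say $\nu$); restricting to $\sheaf{O}^E_\zeta$ and applying Theorem~\ref{thm_local_modules} forces the restriction to be plain, hence $\phi(e_\nu)(\zeta)$ bounded---a contradiction. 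No Baire, no Noetherianity, no Hadamard.

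You instead argue on the $\sheaf{O}^E_\zeta$ side: Theorem~\ref{thm_local_modules} gives you the germs $\Phi_b$, your Baire step cuts the index set down to a finite $T$ (a nice observation in its own right), and then Noetherianity of $\sheaf{O}_\zeta$ plus the identification $\sheaf{O}^E_\zeta/\maxid_\zeta\sheaf{O}^E_\zeta\cong E$ finishes. Your route is longer but yields the sharper intermediate conclusion that any embedding into a free module actually lands in a finite-rank one; the paper's route is shorter and uses only the soft algebraic fact that free modules admit enough functionals to $R$.
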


\section{Background}
Here we quickly review a few notions of complex analysis. For more see
\cite{GR,mujica,serre}.
Let $X,E$ be Banach spaces (always over $\mathbb{C}$) and $\Omega\subset X$ open.
\begin{definition}
A function $f:\Omega\rightarrow E$ is holomorphic if for all $x\in\Omega$
and $\xi\in X$
\[
df(x,\xi)=\lim_{\lambda\rightarrow 0}\frac{f(x+\xi\lambda)-f(x)}{\lambda}
\]
exists, and depends continuously on $(x,\xi)\in\Omega\times X$.
\end{definition}

If $X=\mathbb{C}^m$ with coordinates $(z_1,\ldots,z_m)$, then this is equivalent
to requiring that in some neighborhood of each $a\in\Omega$ one
can expand $f$ in a uniformly convergent power series 
\[
	f=\sum_J e_J (z-a)^J, \qquad e_J\in E,
\]
where multi-index notation is used. For general $X$ one can only talk about
homogeneous expansion. Recall that a function $P$ between vector spaces $V, W$
is an $n$-homogeneous polynomial if $P(v)=l(v,v,\ldots,v)$ where $l:V^n\rightarrow W$ is an $n$-linear map.
Given a ball $B\subset X$ centered at $a\in X$, any holomorphic $f:B\rightarrow E$ can be expanded in a series
\begin{equation}\label{eq_background_homo_exp}
	f(x)=\sum_{n=0}^\infty P_n(x-a), \qquad x\in B,
\end{equation}
where the $P_n:X\rightarrow E$ are continuous $n$-homogeneous polynomials.
The homogeneous components $P_n$ are uniquely determined, and the series 
\eqref{eq_background_homo_exp} converges locally uniformly on $B$.

We denote by $f_x$ the germ at $x\in\Omega$ of a function $\Omega\rightarrow E$,
and by $\sheaf{O}^E$ the sheaf of germs of holomorphic functions $U\rightarrow E$, where $U\subset\Omega$ is open. The sheaf $\sheaf{O}^\mathbb{C}=\sheaf{O}$
is a sheaf of rings over $\Omega$, and $\sheaf{O}^E$ is, in an obvious way,
a sheaf of $\sheaf{O}$-modules.
The sheaves $\sheaf{O}^E$ are called plain sheaves, and their stalks $\sheaf{O}^E_x$ plain modules. When $E=\mathbb{C}^n$, we write $\sheaf{O}^n$,
resp. $\sheaf{O}^n_x$ for $\sheaf{O}^E$, resp. $\sheaf{O}^E_x$.

As said in the Introduction, $\Hom(E,F)$ denotes the space of continuous linear operators between Banach spaces $E$ and $F$, endowed with the operator norm. 
Any holomorphic function $\Phi:\Omega\rightarrow\Hom(E,F)$ induces an $\sheaf{O}$-homomorphism $\sheaf{O}^E\rightarrow\sheaf{O}^F$ and any $\Psi\in\sheaf{O}_x^{\Hom(E,F)}$
induces an $\sheaf{O}_x$-homomorphism $\sheaf{O}^E_x\rightarrow\sheaf{O}^F_x$.
The homomorphisms obtained in this manner are called plain homomorphisms.

\section{Homomorphisms of Plain Sheaves and Modules}
We shall deduce Theorem \ref{thm_local_modules} from a weaker variant,
which, however, is valid in an arbitrary Banach space:
\begin{theorem}\label{thm_local_partial}
Let $X,E,F$ be Banach spaces, $\dim X>0$. Let $\zeta\in X$ and
$\phi:\sheaf{O}^E_\zeta\rightarrow\sheaf{O}^F_\zeta$ an 
$\sheaf{O}_\zeta$-homomorphism.
Then there is a plain homomorphism 
$\psi:\sheaf{O}^E_\zeta\rightarrow\sheaf{O}^F_\zeta$
that agrees with $\phi$ on constant germs.
\end{theorem}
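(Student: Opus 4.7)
The aim is to exhibit a germ $\Psi\in\sheaf{O}_\zeta^{\Hom(E,F)}$ whose induced plain homomorphism agrees with $\phi$ on every constant germ $e\in E$; equivalently, $\Psi(x)e=\phi(e)(x)$ as germs at $\zeta$. Expanding each $\phi(e)$ as $\phi(e)(x)=\sum_{n\ge 0} P_n^{(e)}(x-\zeta)$ with continuous $n$-homogeneous polynomials $P_n^{(e)}\colon X\to F$ depending $\mathbb{C}$-linearly on $e$, and defining $Q_n(\xi)e:=P_n^{(e)}(\xi)$, what I need is a uniform estimate $\|P_n^{(e)}\|\le M r^{-n}\|e\|$ for some $r,M>0$: this makes each $Q_n$ a continuous $n$-homogeneous polynomial valued in $\Hom(E,F)$ with geometric norm decay, so $\Psi=\sum_n Q_n(\cdot-\zeta)$ is the desired germ.

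By Cauchy's inequality, such coefficient bounds reduce to finding $r,M>0$ so that every $e\in E$ with $\|e\|\le 1$ has $\phi(e)$ represented by a holomorphic function on $B(\zeta,r)$ bounded by $M$. I obtain these by Baire category in $E$. For $r,M>0$ set
\[
C_{r,M}:=\{e\in E:\phi(e)\text{ extends holomorphically to }B(\zeta,r)\text{ with }\|\phi(e)\|_{B(\zeta,r)}\le M\};
\]
each $C_{r,M}$ is convex and balanced, and $\bigcup_{r\in\mathbb{Q}_+,M\in\mathbb{N}}C_{r,M}=E$. Rather than asking each $C_{r,M}$ itself to be closed, it suffices to show that for every $r,M$ there exist $r'',M''>0$ with $\overline{C_{r,M}}\subset C_{r'',M''}$. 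The countable family $\{\overline{C_{r,M}}\}$ then covers $E$, Baire gives one with non-empty interior, and by convex-balancedness it contains a ball about $0$, which sits inside some $C_{r'',M''}$; rescaling yields the required uniform bound.

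The inclusion $\overline{C_{r,M}}\subset C_{r'',M''}$ is where I exploit $\sheaf{O}_\zeta$-linearity and the hypothesis $\dim X>0$. Given $e_k\in C_{r,M}$ with $e_k\to e$ in $E$, pick $\alpha\in\maxid_\zeta$ with $|\alpha|<1$ on $B(\zeta,r)$---for example $\alpha(x)=\xi^*(x-\zeta)/C$ for a nonzero $\xi^*\in X^*$ and large $C$---and integers $n_k\to\infty$ fast enough that $g:=e+\sum_k\alpha^{n_k}(e_k-e)$ converges in $\sheaf{O}^E_\zeta$. Since $\phi$ is $\sheaf{O}_\zeta$-linear and preserves the $\maxid_\zeta$-adic filtration, the identity
\[
\phi(g)=\phi(e)+\sum_k\alpha^{n_k}\bigl(\phi(e_k)-\phi(e)\bigr)
\]
holds modulo $\maxid_\zeta^{n_N}\sheaf{O}^F_\zeta$ for every $N$. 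The main technical obstacle is to promote this adic congruence into a genuine sup-norm identity on a definite smaller ball: the $n_k$ must be chosen in view of the common holomorphic radius $r$ of the $\phi(e_k)$ so that high-order vanishing of the tail $\phi(\sum_{k>N}\alpha^{n_k}(e_k-e))$ translates into sup-norm smallness on a positive-radius ball. Once that transfer is carried out, solving for $\phi(e)$ and using the uniform bound $\|\phi(e_k)\|_{B(\zeta,r)}\le M$ yields a sup-estimate for $\phi(e)$ on some $B(\zeta,r'')$, whence $\phi(e)\in C_{r'',M''}$ and the Baire step goes through.
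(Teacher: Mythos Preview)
Your strategy of running Baire category in $E$ (rather than in $X$, as the paper does) is natural, but the argument you give for the key inclusion $\overline{C_{r,M}}\subset C_{r'',M''}$ has a genuine gap, and it is more serious than the ``technical obstacle'' you flag.

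From $\sheaf{O}_\zeta$-linearity you do obtain, after passing to the limit in the $\maxid_\zeta$-adic topology, the identity of germs
\[
\phi(e)\Bigl(1-\sum_k\alpha^{n_k}\Bigr)=\phi(g)-\sum_k\alpha^{n_k}\phi(e_k).
\]
The summand $\sum_k\alpha^{n_k}\phi(e_k)$ and the factor $1-\sum_k\alpha^{n_k}$ are indeed controlled on a ball depending only on $r$, $M$, and the choice of $\alpha$. But $\phi(g)$ is merely \emph{some} germ at $\zeta$: its radius of holomorphy $\rho_g$ and its sup-norm on any fixed ball are completely uncontrolled, and they depend on the particular sequence $(e_k)$ through $g$. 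Consequently the radius you extract for $\phi(e)$ is only $\ge\min(\rho_g,\ldots)$, and there is no way to bound $\rho_g$ from below by something depending only on $r,M$. The Baire step then collapses, because the inclusion $\overline{C_{r,M}}\subset C_{r'',M''}$ requires $r'',M''$ to be \emph{uniform} over all limit points of $C_{r,M}$. The difficulty is not the tail $\phi\bigl(\sum_{k>N}\alpha^{n_k}(e_k-e)\bigr)$ but the head $\phi(g)$ itself: every time you feed a freshly constructed germ into $\phi$, you lose all quantitative control over the output.

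The paper avoids this by first proving, for each fixed $n$ and each $x\in X$, that the linear map $e\mapsto P_n^{(e)}(x)$ is continuous, \emph{before} attempting any uniform-in-$e$ estimate. This is done by an adversarial construction: assuming $P_k(x,\cdot)$ is unbounded, one selects unit vectors $v_j\in E$ so that the germ $e=\sum_j v_j\lambda^j$ (with $\lambda\in X^*$, $\lambda(x)=1$) has image $\phi(e)$ whose $j$-th homogeneous component at $x$ exceeds $j^j$, contradicting the bare fact that $\phi(e)$ is a convergent germ. Once $\pi_n(x):=P_n(x,\cdot)\in\Hom(E,F)$ is established, the paper runs a separate Baire/uniform-boundedness argument in $X$ (its Propositions~3.2 and~3.3) to assemble $\sum_n\pi_n$ into a holomorphic $\Hom(E,F)$-valued germ. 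Incidentally, once that pointwise continuity is in hand, your Baire-in-$E$ scheme would also go through, since $e_k\to e$ then forces $P_n^{(e_k)}(\xi)\to P_n^{(e)}(\xi)$ and the Cauchy bounds pass to the limit, giving $\overline{C_{r,M}}\subset C_{r/2,2M}$; but the continuity step is precisely what your $g$-construction fails to supply.
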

We need two auxiliary results to prove this.
\begin{proposition}\label{prop_aux_local_partial_1}
Let $X,G$ be Banach spaces and $\pi_n:X\rightarrow G$ continuous homogeneous
polynomials of degree $n=0,1,2,\ldots$. If for every $x\in X$ there is
an $\epsilon_x>0$ such that $\sup_n \|\pi_n(\epsilon_xx)\|<\infty$,
then there is an $\epsilon >0$ such that 
\[
	\sup_n\sup_{\|x\|<\epsilon} \|\pi_n(x)\|<\infty.
\]
\end{proposition}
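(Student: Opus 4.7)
The statement has the flavour of Banach--Steinhaus for homogeneous polynomials, so I would combine a Baire category argument with a Cauchy coefficient estimate for vector-valued polynomials.

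For each $k \in \mathbb{N}$ set
\[
D_k = \{x \in X : \|\pi_n(x)\| \le k \text{ for all } n \ge 0\} = \bigcap_{n \ge 0} \pi_n^{-1}\bigl(\overline{B_F(0,k)}\bigr),
\]
which is closed since each $\pi_n$ is continuous. Given $x \in X$, pick $\epsilon_x > 0$ and $k_x \in \mathbb{N}$ with $\|\pi_n(\epsilon_x x)\| \le k_x$ for every $n$, as in the hypothesis. If $N \in \mathbb{N}$ satisfies $N\epsilon_x \ge 1$, then the $n$-homogeneity of $\pi_n$ gives $\|\pi_n(x/N)\| = (N\epsilon_x)^{-n} \|\pi_n(\epsilon_x x)\| \le k_x$, i.e.\ $x \in N \cdot D_{k_x}$. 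Hence $X = \bigcup_{k, N \in \mathbb{N}} N \cdot D_k$, a countable union of closed sets, and Baire's theorem (applicable since $X$ is Banach) produces $k_0, N_0 \in \mathbb{N}$, a point $y_0 \in X$, and $\delta > 0$ such that $B(y_0, \delta) \subset D_{k_0}$.

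Next, fix $z \in X$ with $\|z\| \le 1$, and for each $n$ consider $g(t) = \pi_n(y_0 + tz)$, viewed as an $F$-valued function of $t \in \mathbb{C}$. Since $\pi_n$ arises from an $n$-linear form, $g$ is a polynomial in $t$ of degree at most $n$ whose coefficient of $t^n$ is precisely $\pi_n(z)$. The ball inclusion forces $\|g(t)\| \le k_0$ whenever $|t| < \delta$, so Cauchy's coefficient formula on the circle $|t| = r < \delta$ gives $\|\pi_n(z)\| \le k_0/r^n$; letting $r \to \delta$ yields $\|\pi_n(z)\| \le k_0/\delta^n$. Homogeneity then shows that for any $\epsilon \in (0, \delta)$ and $\|x\| < \epsilon$ one has $\|\pi_n(x)\| \le k_0 (\epsilon/\delta)^n \le k_0$, which is the required uniform bound.

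The main obstacle is the first step: the parameter $\epsilon_x$ depends on $x$, so one must repackage the pointwise boundedness hypothesis as a genuine countable cover of $X$ by closed sets before Baire can be invoked. Once a ball inside $D_{k_0}$ has been extracted, transferring bounds from a shifted ball to the origin is a routine one-variable Cauchy estimate applied to a polynomial of known degree.
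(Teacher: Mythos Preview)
Your argument is correct. The overall strategy---Baire category to locate a ball on which all the $\pi_n$ are simultaneously bounded, then a transfer of this bound to a ball centred at the origin---matches the paper's. The difference lies in the transfer step: the paper invokes the polarization formula
\[
\pi_n(\xi)=\sum_{\sigma_j=\pm 1}\frac{\sigma_1\cdots\sigma_n}{2^n n!}\,\pi_n(\delta x_0+\sigma_1\xi+\cdots+\sigma_n\xi)
\]
and then the Stirling-type inequality $n^n/(e^n n!)\le 1$, whereas you restrict to the complex line $t\mapsto y_0+tz$ and read off the top coefficient of the resulting degree-$n$ polynomial via a one-variable Cauchy estimate. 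Your route is more elementary and arguably cleaner: it avoids the combinatorial polarization identity entirely and gives the sharp bound $\|\pi_n(z)\|\le k_0/\delta^n$ for $\|z\|\le 1$ directly. The paper's approach, on the other hand, is the textbook device for recovering a multilinear form from its diagonal, and generalises more readily to settings where one needs the full symmetric multilinear map rather than just the homogeneous polynomial.

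One cosmetic point: you write $\overline{B_F(0,k)}$ for the closed ball in the target space, but the target is $G$, not $F$.
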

Here, and in the following, we indiscriminately use $\|\cdot\|$ for the
norms on $X,G$, and whatever Banach spaces we encounter.
\begin{proof}
For numbers $A$ and $\delta$, consider the closed sets
\[
X_{A,\delta}=\{x\in X:\sup_n\|\pi_n(\delta x)\|\leq A\}.
\]
By Baire's theorem $X_{A,\delta}$ contains a ball
$\{x_0+y:\|y\|<r\}$ for some $A,\delta,r>0$. As a consequence of the
polarization formula \cite[1:10]{mujica},
\[
\pi_n(\xi)=\sum_{\sigma_j=\pm 1} \frac{\sigma_1\ldots\sigma_n}{2^nn!}
	\pi_n(\delta x_0+\sigma_1\xi+\ldots+\sigma_n\xi), \qquad\text{for }\xi\in X,
\]
see also \cite[Exercise 2M]{mujica}. Therefore if $\|\xi\|<\delta r/n$, 
then
$\pi_n(\xi)\leq A/n!$, and by homogeneity, for $\|x\|<\delta r/e$
\[
\|\pi_n(x)\|=n^ne^{-n}\|\pi_n(ex/n)\|\leq An^n/(e^nn!)\leq A.
\]
\end{proof}

\begin{proposition}\label{prop_weakstar_holo_is_holo}
Let $X,E,F$ be Banach spaces, $\Omega\subset X$ open, and 
$g:\Omega\rightarrow\Hom(E,F)$ a function.
If for every $v\in E$ the function $gv:X\rightarrow F$ is holomorphic,
then $g$ itself is holomorphic.
\end{proposition}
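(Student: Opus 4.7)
The plan is to first establish local boundedness of $g$ by a Baire argument and then extract the homogeneous components of its Taylor expansion in operator norm by Cauchy's inequality on one-dimensional complex slices. The main obstacle is the first step; once local boundedness holds, the rest is an operator-valued version of the classical extraction of Taylor coefficients.

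First I would fix $\zeta_0\in\Omega$ and show $g$ is bounded on some neighborhood of $\zeta_0$ as a $\Hom(E,F)$-valued function. The hypothesis makes each $gv$ continuous at $\zeta_0$, so every $v\in E$ lies in some
\[
A_n=\bigl\{v\in E:\sup_{\|x-\zeta_0\|\leq 1/n}\|g(x)v\|\leq n\bigr\}.
\]
Because each $g(x)\in\Hom(E,F)$ is continuous in $v$, each $A_n$ is an intersection of closed sets and hence closed; the $A_n$ cover $E$, so by Baire some $A_{n_0}$ contains a ball $B(v_0,\eta)$. Subtracting $v_0$ from $v_0+v$ and rescaling converts interior containment into an operator-norm bound $\|g(x)\|\leq M$ on a closed ball $\overline B(\zeta_0,r)\subset\Omega$.

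Next, for each $v\in E$ let $Q_n^v:X\to F$ be the continuous $n$-homogeneous polynomials in the expansion of $gv$ at $\zeta_0$. For $\xi\in X$ the scalar slice $\lambda\mapsto g(\zeta_0+\lambda\xi)v$ is a one-variable holomorphic function whose $n$-th Taylor coefficient at $0$ is $Q_n^v(\xi)$, and it is bounded by $M\|v\|$ on the disc $|\lambda|\leq r/\|\xi\|$; Cauchy's inequality gives $\|Q_n^v(\xi)\|\leq M\|v\|(\|\xi\|/r)^n$. By uniqueness of the expansion, $v\mapsto Q_n^v(\xi)$ is linear, and this estimate shows it is continuous, defining $P_n(\xi)\in\Hom(E,F)$ with $P_n(\xi)v=Q_n^v(\xi)$. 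Polarizing in $\xi$ and using the same Cauchy bound, the associated symmetric $n$-multilinear form $X^n\to\Hom(E,F)$ is continuous, and its diagonal is $P_n$; hence $P_n:X\to\Hom(E,F)$ is a continuous $n$-homogeneous polynomial.

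Finally, the estimate $\|P_n(\xi)\|_{\Hom(E,F)}\leq M(\|\xi\|/r)^n$ makes $\sum_n P_n(\xi)$ converge in operator norm locally uniformly on $\|\xi\|<r$; evaluating the sum on an arbitrary $v\in E$ reproduces $g(\zeta_0+\xi)v$, so $g(\zeta_0+\xi)=\sum_n P_n(\xi)$ in $\Hom(E,F)$. This is a locally uniformly convergent homogeneous expansion of $g$, so $g$ is holomorphic. The crucial and only non-routine point is the Baire step: the hypothesis yields pointwise-in-$v$ local boundedness of $gv$ but with a neighborhood depending on $v$, and uniformizing in $v$ is what requires the Banach--Steinhaus style argument.
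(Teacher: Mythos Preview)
Your proof is correct and follows essentially the same strategy the paper sketches: use a Baire/Uniform Boundedness argument to obtain local boundedness of $g$ in operator norm, then exploit one-variable Cauchy estimates on slices. The paper merely cites \cite[Exercise 8.E]{mujica} and outlines ``uniform boundedness $\Rightarrow$ locally bounded $\Rightarrow$ continuous $\Rightarrow$ holomorphic,'' whereas you write the argument out in full and organize the second half slightly differently (building the operator-valued homogeneous expansion directly rather than first proving continuity); the content is the same.
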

\begin{proof}
This is Exercise 8.E in \cite{mujica}. First one shows using the Principle 
of Uniform Boundedness that $g$ is locally bounded. Standard one
variable
Cauchy representation formulas then show $g$ is continuous and 
ultimately holomorphic.
\end{proof}

\begin{proof}[Proof of Theorem \ref{thm_local_partial}]
If $v\in E$ we write $\tilde v\in\sheaf{O}^{E}_\zeta$ for the constant germ
whose value is $v$. Without loss of generality we can take $\zeta=0$.
Let the germ $\phi(\tilde v)\in\sheaf{O}^F_0$ have homogeneous series
\begin{equation} \label{eq_proof_homo_exp}
	\sum_{n=0}^\infty P_n(x,v).
\end{equation}
Thus, $P_n$ is $\mathbb{C}$-linear in $v$, and for fixed $v$, $P_n(\cdot,v)$
is a continuous $n$-homogeneous polynomial. For each 
$v\in E$ \eqref{eq_proof_homo_exp} converges if $\|x\|$ is sufficiently small.

Now let $\lambda\in \Hom(X,\mathbb{C})$, and suppose that with 
$v_j\in E$ the series $\sum_{i=0}^\infty v_i\lambda^i$ represents
a germ $e\in\sheaf{O}^{E}_0$.
For example, this will be the case if the $v_i$ are unit vectors. 
With an arbitrary $N\in\mathbb{N}$ and some $f\in\sheaf{O}^F_0$
\begin{align*}
	\phi(e)	&=\sum_{i<N} \phi(\tilde v)\lambda^i+\lambda^N f\\
		&=\sum_{j<N} \sum_{n=0}^\infty 
			P_n(\cdot,v_i)\lambda^i+\lambda^Nf\\
		&=\sum_{j<N}\sum_{n=0}^j P_n(\cdot,v_{j-n})\lambda^{j-n}
			+\lambda^Ng, \qquad\text{where }g\in\sheaf{O}^{F}_0.
\end{align*}
Hence the homogeneous components of $\phi(e)$ are 
\begin{equation}\label{eq_homo_comp}
Q_j(x)=\sum_{n=0}^j P_n(x,v_{j-n})\lambda^{j-n}(x), \qquad j=0,1,2,\ldots
\end{equation}
We use this to prove, by induction on $n$, that for any $x\in X$ the map
$v\mapsto P_n(x,v)$ is not only linear but also continuous.

Suppose this is true for $n<k$. Take an $x\in X$, which can be supposed to be
nonzero, and $\lambda\in\Hom(X,\mathbb{C})$ so that $\lambda(x)=1$.
If $v\mapsto P_k(x,v)$ were not continuous, we could inductively select
unit vectors $v_i\in E$ so that
\[
	\begin{split}
	\|P_k(x,v_{j-k})\| > 
		\sum_{n=0}^{k-1}
		\|P_n(x,\cdot)\| & +j^j
		+\sum_{n=k+1}^j \|P_n(x,v_{j-n})\| ,
	\end{split}
\]
for $j=k,k+1,\ldots$. Here $\|P_n(x,\cdot)\|$ stands for the operator
norm of the homomorphism $P_n(x,\cdot)\in \Hom(E,F)$, $n<k$.
However, \eqref{eq_homo_comp} would then imply
\[
\begin{split}
\|Q_j(x)\|\geq 
	\|P_k(x,v_{j-k}) )\| 
	-\sum_{n=0}^{k-1}\|P_n(x,\cdot)\| 
	-\sum_{n=k+1}^j \|P_n(x,v_{j-n})\|  
	>j^j,
\end{split}
\]
which would preclude $\sum Q_j$ from converging in any neighborhood of $0\in X$.
The contradiction shows that $P_k(x,\cdot)\in\Hom(E,F)$, in fact for every
$k$ and $x\in X$.
Let us write $\pi_k(x)$ for $P_k(x,\cdot)$.

Now, for fixed $v\in E$, $P_n(\cdot,v)=\pi_nv$ is holomorphic.
We can apply Proposition \ref{prop_weakstar_holo_is_holo} to
conclude that $\pi_n:X\rightarrow \Hom(E,F)$ is a holomorphic,
$n$-homogeneous polynomial.

Next we estimate $\|\pi_n(x)\|$ for fixed $x\in X$. 
Suppose a sequence $\delta_n\geq 0$ goes to 0 super-exponentially,
in the sense that $\delta_n=o(\epsilon^n)$ for all $\epsilon>0$.
Then for any homogeneous series $\sum p_n$ representing a germ 
$f\in \sheaf{O}^F_0$ we have $\sup_n \delta_n\|p_n(x)\|<\infty$.

In particular, $\sup_n \delta_n\|\pi_n(x)v\|<\infty$ for all $v\in E$,
and by the Principle of Uniform Boundedness, $\delta_n\|\pi_n(x)\|$
is bounded. This being so, there is an $\epsilon=\epsilon_x>0$
such that $\epsilon^n\|\pi_n(x)\|$ is bounded. Indeed,
otherwise we could find $n_1<n_2<\ldots$ so that 
\[
	\|\pi_{n_t}(x)\|>t^{n_t}, \text{ } t=1,2,\ldots.
\]
But then the sequence 
\[
	\delta_n=\left\{\begin{array}{cc} 	t^{-n_t/2} &\text{if }n=n_t,\\
				     	0,&\text{otherwise,}
			\end{array}\right.
\]
would go to $0$ super-exponentially and yet $\delta_{n_t}\|\pi_{n_t}(x)\|\rightarrow\infty$; a contradiction.

Thus, for each $x$ we have found $\epsilon_x>0$ so that $\sup_n\|\pi_n(\epsilon_x x)\|$  is bounded. By Proposition \ref{prop_aux_local_partial_1},
the $\pi_n$ are uniformly bounded on some ball $\{x:\|x\|<\epsilon\}$.
Therefore the series 
\[
	\sum_{n=0}^\infty \pi_n(x)=\Phi(x)
\]
converges uniformly on some neighborhood of $0\in X$, and represents a 
$\Hom(E,F)$-valued holomorphic function there.
By the construction of $\pi_n(x)v=P_n(x,v)$, see \eqref{eq_proof_homo_exp},
the plain homomorphism 
$\sheaf{O}^E_0\rightarrow \sheaf{O}^F_0$
induced by $\Phi$ agrees with $\phi$ on constant germs $\tilde v$, and the proof
is complete.
\end{proof}

\begin{proof}[Proof of Theorem \ref{thm_local_modules}]
In view of Theorem \ref{thm_local_partial}, all we have to show is that 
(for $X=\mathbb{C}^m, 0<m<\infty$) if an $\sheaf{O}_\zeta$-homomorphism
$\phi:\sheaf{O}^E_\zeta\rightarrow\sheaf{O}^F_\zeta$ annihilates
constant germs then it is in fact 0.
This we formulate in a slightly greater generality:
\begin{lemma}\label{lemma_im_is_small}
Let $\zeta\in\mathbb{C}^m$ and $M$ be an $\sheaf{O}_\zeta$-module.
If a homomorphism $\theta:\sheaf{O}^F_\zeta\rightarrow M$ annihilates
all constant germs, then 
\begin{equation}\label{eq_im_is_small}
\image \theta\subset\bigcap_{k=0}^\infty \maxid_\zeta^k M.
\end{equation}
\end{lemma}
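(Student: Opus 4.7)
The plan is to reduce everything to manipulations with power series expansions. Without loss of generality take $\zeta = 0$, and let $f \in \sheaf{O}^F_0$ be arbitrary. I want to show $\theta(f) \in \maxid_0^k M$ for every $k \geq 0$, which will give \eqref{eq_im_is_small}.

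Fix $k$. Since $f$ is a germ of a holomorphic $F$-valued function on some neighborhood of $0 \in \mathbb{C}^m$, it can be expanded as a convergent power series
\[
f(z) = \sum_{J} v_J z^J, \qquad v_J \in F.
\]
Split the germ as $f = f_{<k} + f_{\geq k}$, where $f_{<k} = \sum_{|J| < k} v_J z^J$ is a polynomial and $f_{\geq k}$ is the remainder. Then $f_{<k} = \sum_{|J| < k} z^J \widetilde{v_J}$ is an $\sheaf{O}_0$-linear combination of constant germs, so by the hypothesis on $\theta$,
\[
\theta(f_{<k}) = \sum_{|J| < k} z^J \theta(\widetilde{v_J}) = 0.
\]
Thus $\theta(f) = \theta(f_{\geq k})$, and it remains to prove that $f_{\geq k} \in \maxid_0^k \sheaf{O}^F_0$, so that $\theta(f_{\geq k}) \in \maxid_0^k M$.

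For this one writes $f_{\geq k}$ as a finite sum $\sum_{|J| = k} z^J h_J$ with $h_J \in \sheaf{O}^F_0$. Concretely, for each multi-index $I$ with $|I| \geq k$ choose (once and for all) a splitting $I = J(I) + K(I)$ with $|J(I)| = k$ and $K(I) \geq 0$ componentwise; this is possible because $m < \infty$, so there are only finitely many multi-indices of length $k$ to distribute the terms into. Then set
\[
h_J(z) = \sum_{I \,:\, J(I) = J} v_I\, z^{K(I)},
\]
so that formally $\sum_{|J|=k} z^J h_J(z) = \sum_{|I| \geq k} v_I z^I = f_{\geq k}(z)$. The routine check is that each $h_J$ actually converges on a neighborhood of $0$: if $\sum_I \|v_I\| r^{|I|} < \infty$ for some $r > 0$, then for $|z_i| < r$ one has $|z|^{K(I)} \leq r^{|K(I)|} = r^{|I|-k}$, and the series for $h_J$ is dominated by $r^{-k} \sum_I \|v_I\| r^{|I|}$. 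Hence $h_J \in \sheaf{O}^F_0$, and $f_{\geq k}$ lies in $\maxid_0^k \sheaf{O}^F_0$.

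The only step with real content is this last factorization, and it is where the hypothesis $m < \infty$ enters; otherwise nothing in the argument is subtle. Once it is in place, applying $\theta$ yields $\theta(f) = \theta(f_{\geq k}) = \sum_{|J|=k} z^J \theta(h_J) \in \maxid_0^k M$, and since $k$ is arbitrary, the containment \eqref{eq_im_is_small} follows.
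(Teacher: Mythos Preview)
Your proof is correct and follows essentially the same route as the paper: kill the polynomial truncation because $\theta$ annihilates constants, then observe that the tail lies in $\maxid_\zeta^k\sheaf{O}^F_\zeta$ so its image lands in $\maxid_\zeta^k M$. The paper states this last point in one line (``any $e$ is congruent modulo $\maxid_\zeta^k$ to a polynomial''), while you spell out the explicit factorization $f_{\geq k}=\sum_{|J|=k} z^J h_J$ and check convergence of the $h_J$; that extra detail is fine and does not change the argument.
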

\begin{proof}
Along with constants, $\theta$ will annihilate
the $\sheaf{O}_\zeta$-module generated by constants, in particular, 
the polynomial germs.
Since any $e\in\sheaf{O}^E_\zeta$ is congruent, modulo an arbitrary power
of $\maxid_\zeta^k$ of the maximal ideal to a polynomial, and furthermore,
$\theta(\maxid_\zeta^k\sheaf{O}^E_\zeta)\subset\maxid_\zeta^kM$,
\eqref{eq_im_is_small} follows.
\end{proof}
This then completes the proof of Theorem \ref{thm_local_modules}
since $\bigcap_{k=0}^\infty \maxid_\zeta^k\sheaf{F}_\zeta=0$.
\end{proof}

\begin{proof}[Proof of Theorem \ref{thm_plain}]
Let $\phi:\sheaf{O}^E\rightarrow\sheaf{O}^F$ be an $\sheaf{O}$-homomorphism.
For $v\in E$ let $\hat v:\Omega\rightarrow\sheaf{O}^E$ be the section that
associates with $z\in\Omega$ the germ at $z$ of the constant function $e\equiv v$.
Then $\phi(\hat v)$ is a section of $\sheaf{O}^F$ and so there is a 
holomorphic function $f(\cdot,v):\Omega\rightarrow F$ whose
germs $f(\cdot,v)_z$ at various $z\in\Omega$ agree with 
$\phi(\tilde v)(z)$. By Theorem \ref{thm_local_modules} for each
$\zeta\in\Omega$  we can find a germ $\Phi^\zeta\in\sheaf{O}^{\Hom(E,F)}_\zeta$
 such that 
\[
f(\cdot,v)_\zeta=\Phi^\zeta v
\]
Therefore for fixed $\zeta$, $f(\zeta,\cdot)\in\Hom(E,F)$.
Let $\Phi(\zeta)=f(\zeta,\cdot)$. Proposition \ref{prop_weakstar_holo_is_holo}
implies that $\Phi:\Omega\rightarrow\Hom(E,F)$ is holomorphic and by construction
induces $\phi$ on constant germs.

This means that $\Phi^\zeta$ above and the germ $\Phi_\zeta$ of $\Phi$
induce homomorphisms $\sheaf{O}^E_\zeta\rightarrow\sheaf{O}^F_\zeta$
that agree on constant germs; since we are talking about plain homomorphisms, the
two induced homomorphisms in fact agree. Hence $\phi$ is induced by $\Phi$.
\end{proof}

\section{Depth  Lemmas}
The usual definition of depth, see \cite[pp. 423,429]{eisen} or 
\cite[p.  130]{matsumura}, 
gives the following 
\begin{definition}\label{defn_depth}
Let $R$ be a Noetherian ring (always commutative, unital), $I\subset R$
and ideal and $M$ a finite $R$-module, such that $M\not=IM$. 
We say the $I$-depth 
of $M$, $\depth{I} M$, is positive if there is 
a nonzerodivisor $r$ on $M$ with $r\in I$ and $rM\not=M$;
otherwise the $I$-depth is $0$.

If $M=IM$, in particular, if $M=0$, the convention is that the $I$-depth is positive infinity.
\end{definition}
We note that, when $R$ is a field, the maximal ideal is $\maxid=0$  and so,
if $M$ is a finite $R$-module, then
the $\maxid$-depth 
 of $M$ is positive (infinity) if and
only if $M=0$.
When $R$ is  not a field, there is 
an alternative criterion for the positivity of the depth. 
While this lemma is not new, 
we include it for the sake completeness:
\begin{proposition}\label{prop_depth_lemmas_depth_better}
For a Noetherian local ring $(R,\maxid)$, not a field, a finite $R$-module $M$ 
has 
$\depth{\maxid} M=0$ if and only  if there is a nonzero submodule 
$L\subset M$ such that $\maxid L=0$.
\end{proposition}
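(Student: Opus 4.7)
The plan is to prove the two directions separately, using standard commutative algebra: Nakayama's lemma for the easy direction and the theory of associated primes (together with prime avoidance) for the hard direction.

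For the easy direction, suppose $L\subset M$ is a nonzero submodule with $\maxid L=0$. Since $L\neq 0$ and $L$ is finite over the local Noetherian ring $R$, Nakayama's lemma implies $L\neq \maxid L$, but we already know $\maxid L=0$, so this is automatic. The same argument applied to $M$ shows $M\neq \maxid M$ (if $M=\maxid M$, then $M=0$ by Nakayama, forcing $L=0$). Then every $r\in\maxid$ annihilates the nonzero submodule $L$, hence is a zerodivisor on $M$; combined with $M\neq\maxid M$, this yields $\depth{\maxid}M=0$ by Definition \ref{defn_depth}.

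For the harder direction, suppose $\depth{\maxid}M=0$. Then $M\neq\maxid M$ (in particular $M\neq 0$), and every $r\in\maxid$ is a zerodivisor on $M$ (the condition $rM=M$ for $r\in\maxid$ would force $M=0$ by Nakayama). Recall that for a finite module over a Noetherian ring, the set of zerodivisors equals the union of the finitely many associated primes of $M$. Thus
\[
\maxid\subset \bigcup_{\mathfrak{p}\in\mathrm{Ass}(M)}\mathfrak{p}.
\]
By prime avoidance, $\maxid\subset\mathfrak{p}$ for some $\mathfrak{p}\in\mathrm{Ass}(M)$, and the maximality of $\maxid$ forces $\maxid=\mathfrak{p}\in\mathrm{Ass}(M)$. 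By the definition of an associated prime, there exists $x\in M$ with $\ann(x)=\maxid$. Set $L=Rx$; then $L\cong R/\maxid$ is nonzero (here we use that $R$ is not a field so $\maxid\neq R$, hence $x\neq 0$), and $\maxid L=0$ as required.

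The only real step to justify carefully is the identification of zerodivisors with the union of associated primes and the application of prime avoidance; both are standard results for Noetherian rings that can be cited from \cite{eisen} or \cite{matsumura}. The main conceptual obstacle, such as it is, is recognizing that the condition $rM=M$ in the definition of positive depth never occurs for $r\in\maxid$ thanks to Nakayama, so that the definition reduces to the statement that $\maxid$ consists entirely of zerodivisors on $M$; from there the associated-prime picture makes the equivalence essentially automatic.
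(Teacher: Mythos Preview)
Your proof is correct and follows essentially the same route as the paper's: Nakayama's lemma for the implication $L\Rightarrow\depth{\maxid}M=0$, and associated primes plus prime avoidance (citing \cite[Theorems 3.1 and 3.3]{eisen}) for the converse. The only quibble is the parenthetical ``here we use that $R$ is not a field so $\maxid\neq R$'': in fact $\maxid\neq R$ holds for any local ring, and $x\neq 0$ follows simply because $\ann(x)=\maxid$ is proper; the ``not a field'' hypothesis is not actually invoked at that point.
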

\begin{proof}
Assume $\depth\maxid M=0$. Then,  $M\not=\maxid M$ and every
 $r\in\maxid\backslash\{0\}$ is a zerodivisor on $M$.
At this point we recall the notion of an associated prime: 
if $R$ is a commutative
ring and $M$ is an $R$-module, then a prime ideal $p$ of $R$ is associated
to $M$, if there is an $x\in M$ such that $p=\ann x$.
We shall make use of the following fact:
if $R$ is a Noetherian ring and $M$ is a finite $R$-module, then 
 there are  finitely many primes
associated to $M$, and furthermore, each zerodivisor on $M$ is contained in
one of them, see
\cite[Theorem 3.1]{eisen}.

Thus, in our setting, 
$\maxid$ is a subset of the finite union of the associated primes.
Now, the prime avoidance lemma \cite[Theorem 3.3]{eisen} states
that $\maxid$ is contained in one of the associated primes,
and hence, itself 
is an associated prime. Therefore, 
 $\maxid x=0$ for some nonzero $x\in M$ and we  put $L=xR$. 

Conversely, suppose $L\subset M$ is a nonzero submodule such that 
$\maxid L=0$. Then, every $r\in \maxid\backslash\{0\}$ is a zerodivisor.
Since $M\not=0$ is a finite $R$-module, Nakayama's lemma 
\cite[Corollary 4.8]{eisen} implies that $M\not=\maxid M$.  So,
$\depth{\maxid} M=0$.
\end{proof}

For the proof of Theorem \ref{thm_coh_stalk} we shall need 
a number of Lemmas that are algebraic in nature. Recall
the notion of localization at a prime. Suppose $R$ is a ring,
$p\subset R$ a prime ideal, and $M$ an $R$-module.
Consider the multiplicatively closed set $S=R\backslash p$,
then the localization of $M$ at $p$ is
\[
	M_p=M\times S/\sim\, ,
\]
where $(v,s)\sim(w,t)$ means $q(vt-ws)=0$ for some $q\in S$. 
Elements of $M_p$ are written as fractions $v/s$. The usual rules for
operating with fractions turn $R_p$ into a ring and $M_p$ into a module over it.
Localization is a functor, in particular, a homomorphism 
$\alpha:M\rightarrow M'$ of $R$-modules induces a homomorphism 
$\alpha_p:M_p\rightarrow M'_p$.

\begin{lemma}\label{lemma_depth_lemmas_free_loc}
Let $R$ be a unique factorization domain and 
$(p)\subset R$  a principal prime ideal.
If $N\subset R^m$ is a finite module,
then there is a free submodule $F\subset N$ such that $F_{(p)}=N_{(p)}$.
\end{lemma}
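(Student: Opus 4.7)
The plan is to observe that the localized ring $R_{(p)}$ is a principal ideal domain, deduce that the finitely generated submodule $N_{(p)} \subset R_{(p)}^m$ is free of finite rank, and then lift a basis back to $N$ by clearing denominators.

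First I would verify that $R_{(p)}$ is a PID. If $p=0$ this is just the fraction field of $R$. Otherwise $p$ is irreducible, and unique factorization in $R$ lets one write any nonzero element of $R_{(p)}$ uniquely as $p^n u$ with $u$ a unit of $R_{(p)}$, so $R_{(p)}$ is a discrete valuation ring. In either case $N_{(p)}$ is a finitely generated submodule of the free $R_{(p)}$-module $R_{(p)}^m$, and the structure theorem for finitely generated modules over a PID makes $N_{(p)}$ free of some rank $r \leq m$.

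Next I would choose an $R_{(p)}$-basis $e_1,\ldots,e_r$ of $N_{(p)}$ and write each $e_i = v_i/s_i$ with $v_i \in N$ and $s_i \in R \setminus (p)$. Each $s_i$ is a unit of $R_{(p)}$, so the elements $v_i/1 = s_i e_i$ still form an $R_{(p)}$-basis of $N_{(p)}$. Setting
\[
F = R v_1 + \cdots + R v_r \subset N,
\]
the localization $F_{(p)}$ is generated over $R_{(p)}$ by $v_1/1,\ldots,v_r/1$ and hence equals $N_{(p)}$. Freeness of $F$ on the basis $v_1,\ldots,v_r$ then follows by localizing: if $\sum r_i v_i = 0$ in $R^m$, then $\sum (r_i/1)(v_i/1) = 0$ in $N_{(p)}$ forces each $r_i/1 = 0$ in $R_{(p)}$, and since $R$ is a domain the map $R \to R_{(p)}$ is injective, so $r_i = 0$.

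There is no real obstacle here once one recognizes $R_{(p)}$ as a PID; the remaining content is simply denominator-clearing together with the observation that linear independence passes between $R$ and $R_{(p)}$ for a domain $R$.
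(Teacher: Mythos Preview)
Your proof is correct and follows essentially the same route as the paper: both recognize that $R_{(p)}$ is a PID (a DVR when $p\neq 0$, the fraction field when $p=0$), conclude that $N_{(p)}$ is free, and lift a basis by clearing denominators. The only cosmetic difference is that the paper argues freeness of $N_{(p)}$ directly---observing that in $R_{(p)}$ any nontrivial relation $\sum r_j u_j=0$ can be solved for some $u_j$, so a generating set can be pruned to a basis---whereas you invoke the structure theorem; and you spell out the verification that $F$ is free and $F_{(p)}=N_{(p)}$, which the paper leaves implicit.
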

\begin{proof}
Any element of $R_{(p)}$ is either invertible or divisible by $p$. 
Since $R$ is a UFD, it follows that
given $u_1,\ldots,u_k\in R_{(p)}^m$
any non-trivial linear relation 
\[
	\sum r_ju_j=0,\text{ with }r_j\in R_{(p)},
\]
can be solved for some $u_j$. 
Hence finitely generated submodules of $R_{(p)}^m$ are free. In particular,
$N_{(p)}$ has a free generating set $(v_j/s_j)$, $j=1,\ldots,k$. We can therefore,
take $F$ to be the module generated by $v_j$'s.
\end{proof}
\begin{lemma}\label{lemma_depth_lemmas_reduce_to_torsion}
Let $(R,\maxid)$ be a local ring which is a unique factorization domain, and
$Q$ its field of fractions. Let $\rho:A\rightarrow B$ be a homomorphism of
finite free $R$-modules. If $\depth{\maxid} \coker \rho>0$, then there are a finite free $R$-module $C$ and a homomorphism $\theta:B\rightarrow C$ such that
\begin{enumerate}[(i)]
\item $\ker \rho=\ker\theta\rho$, 
\item $\coker \theta\rho$ is zero or has positive $\maxid$-depth,
\item $(\coker\theta\rho)\otimes Q=0$.
\end{enumerate}
\end{lemma}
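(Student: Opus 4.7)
The plan is to realize $\theta$ as a projection of $B$ onto a judiciously chosen free direct summand. Set $r:=\dim_Q(\image\rho\otimes_R Q)$, the generic rank of $\rho$. If $r$ equals the rank of $B$ then the hypothesis already gives $(\coker\rho)\otimes Q=0$, and $(C,\theta)=(B,\mathrm{id})$ suffices, so I will assume $r<\mathrm{rank}(B)$.

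Next I would fix $B\cong R^m$, view $\rho$ as an $m\times n$ matrix, and choose $r$ rows of this matrix that are $Q$-linearly independent (possible since $\rho$ has $Q$-rank $r$). Letting $\theta:B\to C:=R^r$ be the projection onto those coordinates and $K:=\ker\theta$ the complementary coordinate summand (free of rank $m-r$), conditions (i) and (iii) fall out: $Q$-independence forces $(\ker\theta\cap\image\rho)\otimes Q=0$, which lifts to $\ker\theta\cap\image\rho=0$ because $B$ is torsion-free, yielding $\ker\theta\rho=\ker\rho$; and $\theta(\image\rho)\subset C$ has rank $r=\mathrm{rank}(C)$, so $\coker\theta\rho$ is automatically torsion.

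The substantive work is (ii). A short diagram chase, exploiting that $\theta$ is a projection onto a direct summand, should identify $\coker\theta\rho$ with $M/\overline K$, where $M:=B/\image\rho$ has $\depth{\maxid} M>0$ by hypothesis and $\overline K\subset M$ is the image of $K$, a free $R$-submodule of rank $m-r$. I would then prove the sub-claim: \emph{if $M$ is a finite $R$-module with $\depth{\maxid} M>0$, $\overline K\subset M$ a free submodule, and $\maxid$ is not principal, then $\depth{\maxid}(M/\overline K)>0$.} Assuming $\maxid\in\mathrm{Ass}(M/\overline K)$, I would lift to $v\in M\setminus\overline K$ with $\maxid v\subset\overline K$, pick $r_0\in\maxid$ regular on $M$, and set $w:=r_0v\in\overline K$. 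The identity $sw=r_0(sv)$ for $s\in\maxid$ forces $\maxid w\subset r_0\overline K$, so the class $\bar w\in\overline K/r_0\overline K\cong(R/r_0R)^{m-r}$ is nonzero and killed by $\maxid$; hence $\maxid\in\mathrm{Ass}_R(R/r_0R)$, which in the UFD $R$ consists only of the principal primes dividing $r_0$, contradicting the non-principality of $\maxid$.

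The case where $\maxid$ is principal ($R$ a DVR, or a field) I would handle separately: in a DVR the depth hypothesis forces $\coker\rho$ to be free, so $B=\image\rho\oplus\coker\rho$ splits and projection onto $\image\rho$ gives $\coker\theta\rho=0$; the field case is trivial. The main obstacle, as I see it, is the sub-claim on the quotient $M/\overline K$; the UFD hypothesis enters essentially there, through the identification of $\mathrm{Ass}_R(R/r_0R)$ with the set of height-one principal primes dividing $r_0$.
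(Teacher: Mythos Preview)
Your argument is correct, and it takes a genuinely different route from the paper's. The paper dualizes, writing $\rho=\pi^*$ for $\pi:R^n\to R^m$, and invokes an auxiliary lemma (its Lemma~4.3) to find, for a \emph{single well-chosen principal prime} $(p)$, a free submodule $F\subset\image\pi$ with $F_{(p)}=(\image\pi)_{(p)}$; then $C=\Hom(F,R)$ and $\theta=\sigma^*$ for a lift $\sigma:F\to R^n$. Property~(ii) is obtained by choosing $p$ to divide a prescribed nonzerodivisor on $\coker\rho$ and checking by a localization computation that $p$ stays a nonzerodivisor on $\coker\theta\rho$. Your construction is more elementary: a bare coordinate projection onto $r$ generically independent rows, no dualization, no localization. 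The substantive content migrates into your sub-claim that $\depth_\maxid(M/\overline K)>0$ whenever $\overline K\subset M$ is free and $\maxid$ is non-principal, and your proof of that---reducing to $\maxid\in\mathrm{Ass}(R/r_0R)$ and invoking that in a UFD all associated primes of a principal quotient are height-one principal primes---is clean and uses the UFD hypothesis in essentially the same place the paper does, just packaged differently. Your approach buys a reusable depth lemma and avoids the duality/localization bookkeeping; the paper's approach exhibits the nonzerodivisor on $\coker\theta\rho$ explicitly and feeds its auxiliary Lemma~4.3 into a later sheaf-level argument. One cosmetic point: your DVR/field branch uses a different $\theta$ than the coordinate-projection one, so state up front that the construction of $\theta$ splits according to whether $\maxid$ is principal.
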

\begin{proof}
It will be convenient to assume, as we may, that $A=\Hom(R^m,R)$, similarly 
$B=\Hom(R^n,R)$, and $\rho$ is the transpose of a homomorphism 
$\pi:R^n\rightarrow R^m$.
Set $N=\image \pi$. With a principal prime ideal $(p)\subset R$ to be chosen 
later let
$F\subset N$ be as in Lemma \ref{lemma_depth_lemmas_free_loc}.
Define  a homomorphism $\sigma:F\rightarrow R^n$, by specifying its values
on a free generating set, so that $\pi\circ\sigma$ is the inclusion 
$\iota:F\hookrightarrow R^m$.
We will show that with a suitable choice of $p$ we can take
\[
\theta=\sigma^*:\Hom(R^n,R)=B\rightarrow\Hom(F,R)=C.
\]
We summarize the homomorphisms in question in the following commutative diagram
\[
\xymatrix{
F\ar[d]_\sigma \ar@{^{(}->}[dr] \ar[r]^\iota &R^m\\
R^n \ar@{->>}[r]^\pi &N \ar@{^{(}->}[u] \\
}
\qquad
\xymatrix{
C & A\ar[d] \ar[l]_{\iota^*} \\
  B \ar[u]^\theta &\Hom(N,R).\ar@{_{(}->}[l]_-\rho\ar[ul] 
}
\]
First note that, localizing 
\begin{align}\label{eq_localizing}
\image \pi_{(p)}=N_{(p)}=F_{(p)}=\image \iota_{(p)}, \text{ whence }\\
\ker \rho_{(p)}=\ker \pi_{(p)}^*=\ker \iota_{(p)}^*=\ker \theta_{(p)}\rho_{(p)}\subset A_{(p)}. \nonumber
\end{align}
Pulling back by the injective localization map $A\rightarrow A_p$ we obtain
{(i)}.

Next, $N\otimes Q$ is the image of the vector space homomorphism 
$\pi\otimes \id_Q$, while $(\image \rho)\otimes Q$ is the image of its 
transpose $\rho\otimes\id_Q$. It follows that they have the
same dimension. Since 
$N\otimes Q=N_{(p)}\otimes Q=F_{(p)}\otimes Q=F\otimes Q$ by Lemma 
\ref{lemma_depth_lemmas_free_loc}, $(\image \rho)\otimes Q$, $F\otimes Q$, and so
$C\otimes Q$ have the same dimensions. As $\theta\otimes\id_Q$ 
restricts to  a homomorphism 
$(\image \rho)\otimes Q\rightarrow C\otimes Q$
of equidimensional vector spaces, and by {(i)} it is injective, 
{(iii)} follows.

To achieve {(ii)}, we note that if $R$ is a field, then 
it suffices to set $p=0$, for, in that case, $\coker \theta\rho=(\coker \theta\rho)\otimes Q=0$.
So, we will assume that $R$ is not a field.
We pick a nonzerodivisor 
$r\in R$ 
on $(\coker \rho)$, (see Definition \ref{defn_depth}), 
and let $p$ be one of its prime
divisors. Thus, $p\in\maxid$ is a nonzerodivisor on $\coker \rho$. We claim
it is a nonzerodivisor on $\coker \theta\rho$ as well.

Suppose $p$ multiplies the class  in $\coker \theta\rho$ of a $\gamma\in C$
into $0$. This means that $p\gamma=\theta\rho\alpha=\iota^*\alpha$
with some $\alpha\in A=\Hom(R^m,R)$. It follows that the values that 
$\iota^*\alpha$, resp. $\iota^*_{(p)}\alpha_{(p)}$, take are divisible by $p$ in $R$,
resp. $R_{(p)}$. By \eqref{eq_localizing} $\pi_{(p)}^*\alpha_{(p)}$ takes the same values 
as $\iota_{(p)}^*\alpha_{(p)}$. Now for any $s\in R$, $p$ divides $s$ in $R$ precisely
when $p$ divides $(s/1)$ in $R_{(p)}$;
therefore $\pi^*\alpha=\rho\alpha$ is divisible by $p$, say
\begin{equation}
\label{eq_depth_lemmas_div_cond}
\rho\alpha=p\beta, \text{ }\beta\in B.
\end{equation}
Thus, $p\gamma=\theta\rho\alpha=p\,\theta\beta$ and $\gamma=\theta\beta$. On the other hand,
\eqref{eq_depth_lemmas_div_cond} shows that $p$ multiplies the class of $\beta$
in $\coker \rho$ into $0$.
By our choice of $p$, this implies the class of $\beta$ is already $0$,
i.e., $\beta=\image\rho$. Hence $\gamma=\theta\beta\in\image\theta\rho$,
and the class of $\gamma$ in $\coker \theta\rho$ is $0$.
Thus, $p$ is indeed a nonzerodivisor on $\coker \theta\rho$.

But then we are done, since, 
by Nakayama's lemma $\maxid\,\coker\theta\rho\not=\coker\theta\rho$,
unless $\coker\theta\rho=0$.
Therefore, $\coker \theta\rho$ has  positive 
$\maxid$-depth.
\end{proof}

In the next lemma we use the following notation. As before, $\sheaf{O}_0$
is the local ring at $0\in\mathbb{C}^m$, $m\geq1$.
The subring of germs independent of the last coordinate $z_m$ of 
$z\in\mathbb{C}^m$ is denoted $\sheaf{O}'_0$;
the maximal ideals in $\sheaf{O}_0$, $\sheaf{O}'_0$ are $\maxid$, $\maxid'$.
Any $\sheaf{O}_0$-module is automatically an $\sheaf{O}_0'$-module.

\begin{lemma}\label{lemma_depth_lemmas_ind_on_dim}
Suppose $h\in\sheaf{O}_0$ is the germ of a Weierstrass polynomial and
$M$ a finite $\sheaf{O}_0$-module such that $hM=0$. Then
$\depth\maxid M=0$ if and only if $\depth{\maxid'}M=0$.
\end{lemma}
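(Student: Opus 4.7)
The plan is to invoke Proposition \ref{prop_depth_lemmas_depth_better}, which rephrases depth zero as the existence of a nonzero element annihilated by the maximal ideal. Thus the lemma reduces to the equivalence: there exists nonzero $x\in M$ with $\maxid x=0$ if and only if there exists nonzero $y\in M$ with $\maxid' y=0$. To apply the proposition on the $\sheaf{O}'_0$ side, I first note that Weierstrass preparation makes $\sheaf{O}_0/(h)$ a finite free $\sheaf{O}'_0$-module of rank $\deg h$; since $hM=0$, $M$ inherits the structure of a finite $\sheaf{O}'_0$-module. The degenerate case $m=1$, where $\sheaf{O}'_0=\mathbb{C}$ is a field, is handled separately: then $\maxid'=0$ so $\depth{\maxid'}M=0$ simply means $M\neq 0$, and the argument below still applies with trivial modifications.

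The forward direction is immediate: since $\maxid'\subset\maxid$, any $x\neq 0$ with $\maxid x=0$ also satisfies $\maxid' x=0$, and $M\neq \maxid' M$ follows from $M\neq 0$ by Nakayama over $\sheaf{O}'_0$.

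For the converse, suppose $0\neq x\in M$ with $\maxid' x=0$. Using the Weierstrass form
\[
h=z_m^d+a_1(z')z_m^{d-1}+\ldots+a_d(z'),\qquad a_j\in\maxid',
\]
each $a_j$ annihilates $x$, so $hx=0$ collapses to $z_m^d x=0$. Let $k\geq 1$ be minimal with $z_m^k x=0$ and set $y=z_m^{k-1}x$; then $y\neq 0$, $z_m y=0$, and $\maxid' y=z_m^{k-1}\maxid' x=0$. Since $\maxid=\maxid'\sheaf{O}_0+(z_m)$, it follows that $\maxid y=0$, and Proposition \ref{prop_depth_lemmas_depth_better} applied over $\sheaf{O}_0$ yields $\depth{\maxid}M=0$.

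The argument is essentially mechanical once the depth-zero characterization is in hand; the only real content is the observation that the Weierstrass shape of $h$ forces $z_m$ to act nilpotently on the $\maxid'$-annihilator of $M$, which lets one descend a $\maxid'$-socle element to a $\maxid$-socle element. No deeper algebra (associated primes, localization, etc.) is required beyond what already enters Proposition \ref{prop_depth_lemmas_depth_better}, so I expect no significant obstacle.
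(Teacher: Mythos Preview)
Your proposal is correct and follows essentially the same approach as the paper's proof: both reduce via Proposition~\ref{prop_depth_lemmas_depth_better} to finding socle elements, handle the $m=1$ field case separately, and use the Weierstrass shape of $h$ to pass from a $\maxid'$-annihilated element to a $\maxid$-annihilated one by multiplying by the appropriate power of $z_m$. The only cosmetic differences are the order in which you treat the two directions and your phrasing via the minimal $k$ with $z_m^k x=0$ (the paper takes the largest $k$ with $z_m^k u\neq 0$, which yields the same element).
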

\begin{proof}
We note first that $M$ is a finite $\sheaf{O}_0'$-module. Indeed,
it is an $\sheaf{O}_0/h\sheaf{O}_0$-module, and finite as such.
Since $\sheaf{O}_0/h\sheaf{O}_0$ is also finitely generated as an 
$\sheaf{O}'_0$-module, our claim follows.

If $M=0$, then $\depth\maxid M=\depth{\maxid'}M=\infty$. So, we 
will assume $M\not=0$.
Suppose first 
that $\depth{\maxid'}M=0$. 
We claim that 
there is a nonzero $u\in M$ such that $\maxid' u=0$.
Indeed, this is obvious when $m=1$, since $\sheaf{O}'_0$ is a
field. On the other hand,
when $m\geq 2$, we arrive at this conclusion by applying Proposition 
\ref{prop_depth_lemmas_depth_better}.  

Write $h=z_m^d+\sum_{j=0}^{d-1}a_jz_m^j$, where $a_j\in\maxid'$, $d>0$. As
$u\not=0$ but
\[
	z_m^du=hu-\sum_{j=0}^{d-1}a_juz_m^j=0,
\]
there is a largest $k=0,1,\ldots,k-1$ such that $v=z_m^ku\not=0$.
Then $z_mv=0$, whence $\maxid v=0$, and we conclude by Proposition 
\ref{prop_depth_lemmas_depth_better}, (note, $\sheaf{O}_0$ is not a field).

Conversely, suppose that $\depth{\maxid} M=0$. By Proposition
\ref{prop_depth_lemmas_depth_better}, there is a nonzero submodule $L\subset M$
with $\maxid L=0$. We claim that $\depth{\maxid'} M=0$. 
Indeed, since $\maxid'\subset\maxid$, this is a consequence of
 Proposition \ref{prop_depth_lemmas_depth_better} when $m\geq 2$.
On the other hand, this is obvious if $m=1$, for in this case, $\sheaf{O}_0'$ is a field and
 $M\not=0$.
\end{proof}

\section{The Proof of Theorem \ref{thm_coh_stalk}}
Let us write $(T_m)$ for the statement of Theorem \ref{thm_coh_stalk},
to indicate the number of variables involved. 
We prove it by induction on $m\geq 0$. 
When $m=0$, the depth assumption does not hold, unless $M=0$, 
and so, the claim
is obvious.

When $m=1$, $\sheaf{O}_\zeta$ is a principal ideal domain, and by the corresponding
structure theorem $M$ is a direct sum of a free module and modules of form 
$\sheaf{O}_\zeta/\maxid_\zeta^k$, see \cite[Chapter 12, Theorem 5]{DummitFoote}. If $M$ contained
a submodule of the latter type, then by Proposition 
\ref{prop_depth_lemmas_depth_better} it would have $\maxid$-depth $0$ (take
$L=\maxid_\zeta^{k-1}/\maxid_\zeta^k\subset \sheaf{O}_\zeta/\maxid_\zeta^k$).
Therefore $M$ is free, $p$ has a right inverse $q$ and $\psi=q\phi$
will do.

Now assume $(T_{m-1})$ holds for some $m\geq 2$, and prove $(T_m)$.
We
are free to take $\zeta=0$. Let $Q$ be the field of fractions of $\sheaf{O}_0$.
We first verify $(T_m)$ for torsion modules $M$, i.e., those for
which $M\otimes Q=0$.

Since each generator $v\in M$ is annihilated by some nonzero $h_v\in\sheaf{O}_0$, there is a nonzero $h\in\sheaf{O}_0$ that annihilates all of $M$.
We can assume $h$ is (the germ of) a Weierstrass polynomial of degree
$d\geq 1$ in $z_m$. We write $z=(z',z_m)$ for $z\in\mathbb{C}^m$,
and let $\sheaf{O}_0', \sheaf{O}_0^{\prime F}$ denote the ring/module
of the corresponding germs in $\mathbb{C}^{m-1}$.
As before we embed $\sheaf{O}_0'\subset \sheaf{O}_0$,
$\sheaf{O}_0^{\prime F}\subset\sheaf{O}_0^F$.
This makes any $\sheaf{O}_0$-module an $\sheaf{O}_0'$-module, and any
homomorphism $\phi:N_1\rightarrow N_2$ of $\sheaf{O}_0$-modules descends
to an $\sheaf{O}'_0$-homomorphism 
\begin{equation}\label{eq_proof_descend}
	\phi':N_1/hN_1\rightarrow N_2/h N_2.
\end{equation}
As an example, a version of Weierstrass' division theorem remains true
for holomorphic germs valued in a Banach space $F$ 
(the proof in \cite{GR} applies). Concretely, we can write any $f\in\sheaf{O}^F_0$
uniquely as 
\begin{equation}
\label{eq_proof_weierstrass_div}
f=hf_0+\sum_{j=0}^{d-1}f_j'z_m^j, \qquad f_0\in\sheaf{O}_0^F, f_j'\in\sheaf{O}_0^{\prime F}.
\end{equation}
Clearly, the $\sheaf{O}_0'$-homomorphism
\begin{equation}\label{eq_proof_weierstrass_div_map}
\sheaf{O}_0^F\ni f\mapsto (f_0',\ldots,f'_{d-1})\in(\sheaf{O}_0^{\prime F})^{\oplus d}
\end{equation}
descends to an isomorphism
\begin{equation}
\label{eq_proof_weierstrass_iso}
\sheaf{O}_0^F/h\sheaf{O}_0^F\xrightarrow{\approx}(\sheaf{O}_0^{\prime F})^{\oplus d}
\end{equation}
of $\sheaf{O}_0'$-modules. Composing this with the embedding
\begin{equation}\label{eq_proof_weierstrass_map}
(\sheaf{O}_0^{\prime F})^{\oplus d}\ni (f'_j)\mapsto \sum_j f_j'z_m^j\in\sheaf{O}_0^F,
\end{equation}
we obtain an $\sheaf{O}_0'$-homomorphism
\begin{equation}\label{eq_proof_weierstrass_right_inverse}
\sheaf{O}_0^F/h\sheaf{O}_0^F\rightarrow\sheaf{O}_0^F,
\end{equation}
which is a right inverse of the canonical projection $\sheaf{O}_0^F\rightarrow\sheaf{O}_0^F/h\sheaf{O}_0^F$.

Now $p:\sheaf{O}^n_0\rightarrow M$ and $\phi:\sheaf{O}_0^E\rightarrow M$ of 
the theorem induce $\sheaf{O}_0'$-homomorphisms
\[
	p':\sheaf{O}_0^n/h\sheaf{O}_0^n\rightarrow M,\text{ }
	\phi':\sheaf{O}^E_0/h\sheaf{O}^E_0\rightarrow M,
\]
as in \eqref{eq_proof_descend}, remembering that $hM=0$. 
Clearly, $p'$ is surjective.
Also, by Lemma \ref{lemma_depth_lemmas_ind_on_dim} $\depth{\maxid'} M>0$.
Because of the isomorphism \eqref{eq_proof_weierstrass_iso}, 
$(T_{m-1})$ implies there is an $\sheaf{O}'_0$-homomorphism 
\[
\bar\chi:\sheaf{O}_0^E/h\sheaf{O}_0^E\rightarrow\sheaf{O}_0^n/h\sheaf{O}_0^n
\]
such that $\phi'=p'\bar\chi$.  Since the projection
$\sheaf{O}^n_0\rightarrow\sheaf{O}^n_0/h\sheaf{O}_0^n$ has a right inverse, cf. 
\eqref{eq_proof_weierstrass_right_inverse}, $\bar \chi$ is induced by an $\sheaf{O}_0'$-homomorphism $\chi:\sheaf{O}_0^E\rightarrow\sheaf{O}_0^n$, which
then satisfies $\phi=p\,\chi$. All that remains is to replace $\chi$ by an
$\sheaf{O}_0$-homomorphism $\psi$, which we achieve as follows.

If a holomorphic germ, say $f$, at $0$  valued in a Banach
space $(F,\|\cdot\|_F)$ has a representative on a 
connected neighborhood $V$ of $0$, we write 
\[
	[f]_V=\sup_{v\in V} \|f(v)\|_F\leq\infty,
\]
where $f$ on the right stands for the representative.
Now consider the composition of $\chi|_{\sheaf{O}_0^{\prime E}}$ with
\eqref{eq_proof_weierstrass_div_map}, where $F=\mathbb{C}^n$,
\begin{equation}\label{eq_proof_crazy}
	\sheaf{O}_0^{\prime E}\xrightarrow{\chi}\sheaf{O}^n_0
	\rightarrow\sheaf{O}^{\prime nd}_0.
\end{equation}
By Theorem \ref{thm_plain} this $\sheaf{O}_0'$-homomorphism is induced by a 
$\Hom(E,\mathbb{C}^{nd})$-valued holomorphic function, defined on
some neighborhood $U$ of $0\in\mathbb{C}^{m-1}$. It follows that
if $V'\Subset U$ and $V''\Subset \mathbb{C}$ are connected neighborhoods
of $0\in\mathbb{C}^{m-1}$, resp. $0\in\mathbb{C}$, then there is a constant
$C$ such that for each $e'\in\sheaf{O}^{\prime E}_0$ that has a representative
defined on $V'$
\begin{equation}\label{eq_proof_bdd}
[\chi(e')]_{V'\times V''}\leq C[e']_{V'}.
\end{equation}
Indeed, $\chi$ is obtained by composing \eqref{eq_proof_crazy} with 
\eqref{eq_proof_weierstrass_map} 
(again, $F=\mathbb{C}^n$), and this latter is trivial to estimate.

Now define $\psi:\sheaf{O}^E_0\rightarrow\sheaf{O}_0^n$ by
$\psi(e)=\sum_{j=0}^\infty\chi(e'_j)z_m^j$,
if
$e=\sum_{k=0}^\infty e_j'z_m^j\in\sheaf{O}_0^E$.
Cauchy estimates for $e_j'$ and \eqref{eq_proof_bdd}
together imply that the series above indeed represents a germ $\psi(e)\in\sheaf{O}^n_0$.
It is straightforward that $\phi$ is an $\sheaf{O}_0$-homomorphism. Because 
of this, $p\psi=\phi$ holds on $h\sheaf{O}_0^E$, both sides being zero.
It also holds on polynomials $e=\sum_{j=0}^k e_j'z_m^j$, as
\[
	(p\psi)(e)=p\sum_{j}\chi(e_j')z_m^j=\sum_j\phi(e_j')z_m^j=\phi(e).
\]
The division formula \eqref{eq_proof_weierstrass_div}, this time with $F=E$,
now implies $p\psi=\phi$ on all $\sheaf{O}_0^E$.

Having taken care of torsion modules, consider a general module $M$ as
in the theorem. 
Since $\sheaf{O}_0$ is Noetherian, $\ker p$ is finitely generated;
let 
\[
	\rho:\sheaf{O}_0^r\rightarrow \sheaf{O}_0^n
\]
have image $\ker p$. So, $M\approx \coker \rho$. Construct a free 
$\sheaf{O}_0$-module $C$ together with a homomorphism
 $\theta:\sheaf{O}_0^n\rightarrow C$ as in Lemma 
\ref{lemma_depth_lemmas_reduce_to_torsion}.
Let $\pi:C\rightarrow\coker \theta\rho$ be the canonical projection.
Here is a diagram to keep track of all the homomorphisms in question:
\begin{equation*}
\xymatrix{
\sheaf{O}_0^r\ar[d]_\rho\\
\sheaf{O}^n_0  \ar[d]_p\ar[r]^\theta &C \ar[r]^-\pi &\coker\theta\rho\\
M & &\sheaf{O}_0^E. \ar[ll]_\phi \ar[u]_{\tilde{\phi}} \ar@{-->}[lu]|{\tilde\psi}\\
}
\end{equation*}
We are yet to introduce $\tilde{\phi}$, $\tilde{\psi}$. For $e\in\sheaf{O}_0^E$
choose $v\in \sheaf{O}_0^n$ so that $p(v)=\phi(e)$.
Then $\pi\theta(v)$ is independent of which $v$ we choose, 
since any two choices differ by an element of $\ker p=\image\rho$, 
which $\pi\theta$ then maps to $0$. We let $\tilde{\phi}(e)=\pi\theta(v)$.
We want to lift $\tilde{\phi}$ to $C$; this certainly can be done if 
$\coker\theta\rho=0$. Otherwise Lemma \ref{lemma_depth_lemmas_reduce_to_torsion}
guarantees that $\depth\maxid\coker\theta\rho>0$ and 
$(\coker\theta\rho)\otimes Q=0$. Hence we can apply the first part of this proof to obtain a homomorphism 
$\tilde{\psi}:\sheaf{O}^E_0\rightarrow C$
such that $\pi\tilde{\psi}=\tilde{\phi}$.

Finally, we lift $\tilde{\psi}$ to $\sheaf{O}^n_0$ as follows.
For $e\in\sheaf{O}^E_0$ choose $v\in\sheaf{O}^n_0$ and $w\in\sheaf{O}^r_0$
so that
$\phi(e)=p(v)$ and $\tilde{\psi}(e)=\theta(v)+\theta\rho(w)$.
Again $v+\rho(w)\in\sheaf{O}^n_\zeta$ is independent of the choices.
(It suffices to verify this for $e=0$. Then $v\in\ker p=\image\rho$; let $v\in\rho(u)$, $u\in\sheaf{O}^r_0$.
Hence $0=\theta(v)+\theta\rho(w)=\theta\rho(u+w)$. By Lemma \ref{lemma_depth_lemmas_reduce_to_torsion}
this implies $0=\rho(u+w)=v+\rho w$ as claimed.)

Therefore we can define a homomorphism $\psi:\sheaf{O}^E_0\rightarrow\sheaf{O}^n_0$
by letting $\psi(e)=v+\rho w$. Since $p\psi(e)=p(v)=\phi(e)$,
$\psi$ is the homomorphism we were looking for.

We conclude this section by showing that the depth condition in Theorem 
\ref{thm_coh_stalk} is also necessary.

\begin{theorem}\label{thm_necessary}
Let $m,n\geq 1$, $\zeta\in\mathbb{C}^m$, $M\not=0$ a finite
$\sheaf{O}_\zeta$-module, and $p:\sheaf{O}^n_\zeta\rightarrow M$ an
epimorphism. If $\depth{\maxid_\zeta} M=0$, then for any 
infinite dimensional Banach space $E$ there is a homomorphism $\phi:\sheaf{O}^E_\zeta\rightarrow M$ that does not factor through $p$.
\end{theorem}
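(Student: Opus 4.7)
The plan is to construct $\phi$ from a discontinuous linear functional on $E$ and derive a contradiction by invoking Theorem \ref{thm_local_modules} to force any lift to be plain. By Proposition \ref{prop_depth_lemmas_depth_better}, the depth hypothesis yields a nonzero $x\in M$ with $\maxid_\zeta x=0$ (note $\sheaf{O}_\zeta$ is not a field, since $m\ge 1$). Because $M$ is a finite module over the Noetherian local ring $\sheaf{O}_\zeta$, Krull's intersection theorem lets me fix $k\ge 1$ with $x\notin\maxid_\zeta^kM$. Since $E$ is infinite-dimensional, I pick a discontinuous $\mathbb{C}$-linear functional $\ell:E\to\mathbb{C}$ (assign unbounded values along a countable normalized linearly independent sequence and extend via a Hamel basis). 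Then setting $\phi(e):=\ell(e(\zeta))\,x$ gives an $\sheaf{O}_\zeta$-homomorphism, because $\maxid_\zeta x=0$ forces every $r\in\sheaf{O}_\zeta$ to act on $x$ as multiplication by its value $r(\zeta)$.

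For the contradiction, suppose $\phi=p\psi$ for some $\sheaf{O}_\zeta$-homomorphism $\psi:\sheaf{O}^E_\zeta\to\sheaf{O}^n_\zeta$. Theorem \ref{thm_local_modules} makes $\psi$ plain, induced by a germ $\Psi=\sum_J \Psi_J (z-\zeta)^J$ with $\Psi_J\in\Hom(E,\mathbb{C}^n)$. Writing $\tilde v$ for the constant germ of value $v$, the Taylor series of $\psi(\tilde v)$ reads $\sum_J \Psi_J(v) (z-\zeta)^J$, and its reduction modulo $\maxid_\zeta^k\sheaf{O}^n_\zeta$ is the truncation to $|J|<k$. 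Since $\sheaf{O}_\zeta/\maxid_\zeta^k$ is a finite-dimensional $\mathbb{C}$-algebra, both $\sheaf{O}^n_\zeta/\maxid_\zeta^k\sheaf{O}^n_\zeta$ and $M/\maxid_\zeta^kM$ are finite-dimensional $\mathbb{C}$-vector spaces; the continuity of each $\Psi_J$ then makes $v\mapsto\psi(\tilde v)\pmod{\maxid_\zeta^k\sheaf{O}^n_\zeta}$ a continuous linear map into a finite-dimensional space, and composing with the induced map $\sheaf{O}^n_\zeta/\maxid_\zeta^k\sheaf{O}^n_\zeta\to M/\maxid_\zeta^kM$ (continuous for the same reason) shows that $v\mapsto\phi(\tilde v)\pmod{\maxid_\zeta^kM}$ is continuous.

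On the other hand, this reduction equals $\ell(v)\bar x$, where $\bar x$ is the class of $x$ and is nonzero by the choice of $k$; composing with any $\mathbb{C}$-linear form on $M/\maxid_\zeta^kM$ sending $\bar x$ to $1$ then exhibits $\ell$ as a continuous functional, contradicting its choice. The main delicate point I foresee is this continuity-through-the-quotient argument: plainness of $\psi$ from Theorem \ref{thm_local_modules} is the essential input, but one has to keep track of the finite-dimensionality of the $\maxid_\zeta^k$-quotients cleanly in order to transfer continuity from the Taylor coefficients of $\Psi$ through $p$ to the reduction of $\phi$. Everything else — the algebra, the verification of $\sheaf{O}_\zeta$-linearity of $\phi$, and the production of a discontinuous $\ell$ — is routine.
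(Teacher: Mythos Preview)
Your proposal is correct and follows essentially the same strategy as the paper: build $\phi$ by composing evaluation at $\zeta$ with a discontinuous linear map into the $\maxid_\zeta$-annihilated part of $M$, then invoke Theorem \ref{thm_local_modules} to make any hypothetical lift $\psi$ plain and derive a contradiction with discontinuity. The one place where you are more explicit than the paper is the continuity transfer: the paper simply says ``by looking at how $\psi$ acts on constant germs, we see that $l$ must be continuous,'' whereas you use Krull's intersection theorem to pick $k$ with $x\notin\maxid_\zeta^kM$ and then pass to the finite-dimensional quotient $M/\maxid_\zeta^kM$; this extra care is genuinely warranted, since the socle element $x$ can lie in $\maxid_\zeta M$ (e.g.\ $M=\sheaf{O}_\zeta/\maxid_\zeta^2$), so reducing modulo $\maxid_\zeta$ alone need not detect $x$.
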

\begin{proof}
By Proposition \ref{prop_depth_lemmas_depth_better}, $M$ has a submodule 
$N\not=0$ for which $\maxid_\zeta N=0$. This implies that $N$ is a finite dimensional vector space over $\mathbb{C}\subset\sheaf{O}_\zeta$ ($\mathbb{C}$
embedded as constant germs).
Consider the homomorphism of $\mathbb{C}$-vector spaces 
\[
	\epsilon:\sheaf{O}_\zeta^E\rightarrow E,
	\text{ } \epsilon(e)=e(0),
\]
and a $\mathbb{C}$-linear map $l:E\rightarrow N$. The composition 
$l\epsilon:\sheaf{O}^E_\zeta\rightarrow N$ is a homomorphism of $\sheaf{O}_\zeta$-modules.
If it can be written as $p\psi$, where  $\psi:\sheaf{O}^E_\zeta\rightarrow\sheaf{O}_\zeta^n$
is a homomorphism of$\sheaf{O}_\zeta$-modules, then $\psi$ is plain by Theorem
\ref{thm_local_modules} and, by looking at how $\psi$ acts on constant germs,
we see that $l$ must be continuous. Therefore, by taking $l:E\rightarrow N$
to be linear and discontinuous, we obtain a nonfactorizable homomorphism $\psi=l\epsilon$.
\end{proof}

\section{ A Local Theorem }
\begin{theorem}\label{thm_local_thm}
Let $\sheaf{S}$ be a coherent sheaf over an open $\Omega\subset\mathbb{C}^m$,
such that the $\maxid_\zeta$-depth of each nonzero stalk is positive.
Suppose $p:\sheaf{O}^n\rightarrow \sheaf{S}$ is an epimorphism
and $E$ is a Banach space.
Then 
any $\sheaf{O}$-homomorphism $\sheaf{O}^E\rightarrow\sheaf{S}$
factors 
through $p$ in some neighborhood of $\zeta$. 
\end{theorem}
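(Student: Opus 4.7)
My plan is to combine Theorem~\ref{thm_coh_stalk} with Theorem~\ref{thm_local_modules} to construct the factorization at the single stalk $\zeta$, and then propagate it to a full neighborhood. If $\sheaf{S}_\zeta=0$, coherence makes $\sheaf{S}$ vanish near $\zeta$ and $\psi=0$ works, so assume $\sheaf{S}_\zeta\ne 0$; the depth hypothesis then gives $\depth{\maxid_\zeta}\sheaf{S}_\zeta>0$. Applied to $\phi_\zeta$ and $p_\zeta$, Theorem~\ref{thm_coh_stalk} produces a lift $\psi_\zeta:\sheaf{O}^E_\zeta\to\sheaf{O}^n_\zeta$ with $p_\zeta\psi_\zeta=\phi_\zeta$. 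By Theorem~\ref{thm_local_modules} this $\psi_\zeta$ is plain, induced by some $\Psi\in\sheaf{O}^{\Hom(E,\mathbb{C}^n)}_\zeta$; picking a representative of $\Psi$ on a neighborhood $U\ni\zeta$ yields a plain sheaf homomorphism $\psi:\sheaf{O}^E|_U\to\sheaf{O}^n|_U$ whose stalk at $\zeta$ is $\psi_\zeta$.

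It remains to show that $\eta=p\psi-\phi:\sheaf{O}^E|_U\to\sheaf{S}|_U$ in fact vanishes on some neighborhood $V\subset U$ of $\zeta$, not merely at the single stalk. My intended reduction rests on Lemma~\ref{lemma_im_is_small}: if one can find $V$ on which every section $\eta(\hat v)$, $v\in E$, is identically zero, then at each $z\in V$ the stalk map $\eta_z$ annihilates every constant germ, so by the lemma $\image\eta_z\subset\bigcap_k\maxid_z^k\sheaf{S}_z$. Since $\sheaf{S}_z$ is a finite module over the Noetherian local ring $\sheaf{O}_z$, Krull's intersection theorem collapses this intersection to $0$; hence $\eta_z=0$ at every $z\in V$, so $\eta|_V=0$ and $\psi|_V$ is the factorization sought.

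The main obstacle I expect is the \emph{uniformity} in $v$: for each individual $v\in E$ the section $\eta(\hat v)\in\Gamma(U,\sheaf{S})$ does vanish as a germ at $\zeta$ and therefore on some neighborhood $U_v$, but $U_v$ depends on $v$. I would attempt a Baire category argument. Fix a countable basis of Stein neighborhoods $V_n\downarrow\{\zeta\}$, equip each $\Gamma(V_n,\sheaf{S})$ with its natural Fréchet topology as the quotient of $\Gamma(V_n,\sheaf{O}^n)$ by the closed subspace $\Gamma(V_n,\ker p)$ (using coherence of $\ker p$), and consider the $\mathbb{C}$-linear subspaces $A_n=\{v\in E:\eta(\hat v)|_{V_n}=0\}$. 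They exhaust $E$; if each is closed in the Banach space $E$, Baire forces some $A_n$ to be all of $E$, which gives the required uniform neighborhood. Closedness of $A_n$ reduces to continuity of $v\mapsto\eta(\hat v)|_{V_n}$: the $p\psi$-summand is continuous because $\Psi$ is locally bounded, so the real content is continuity of $v\mapsto\phi(\hat v)|_{V_n}$. This is not given by the bare algebraic structure of $\phi$ and will presumably have to be extracted using a local free resolution $\sheaf{O}^r\to\sheaf{O}^n\to\sheaf{S}\to 0$ together with Theorem~\ref{thm_plain} applied to finite-dimensional subspaces of $E$, in the spirit of the uniform-boundedness arguments of Propositions~\ref{prop_aux_local_partial_1} and~\ref{prop_weakstar_holo_is_holo}.
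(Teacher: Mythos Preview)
Your overall architecture coincides with the paper's: lift $\phi_\zeta$ through $p_\zeta$ via Theorem~\ref{thm_coh_stalk}, recognize the lift as plain via Theorem~\ref{thm_local_modules}, extend $\Psi$ to a neighborhood, and then reduce $\eta=p\psi-\phi=0$ to the vanishing of $\eta$ on constant germs using Lemma~\ref{lemma_im_is_small} together with Krull's intersection theorem. Up to that point you are on exactly the same track.

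The divergence, and the genuine gap, is in how you obtain a \emph{single} neighborhood on which $\eta(\hat v)=0$ for all $v\in E$. Your Baire scheme needs each $A_n=\{v:\eta(\hat v)|_{V_n}=0\}$ to be closed, which as you note boils down to continuity of $v\mapsto\phi(\hat v)|_{V_n}$ into the Fr\'echet space $\Gamma(V_n,\sheaf{S})$. This is not a minor technicality: it is essentially an automatic-continuity statement for $\sheaf{O}$-homomorphisms $\sheaf{O}^E\to\sheaf{S}$, i.e.\ a variant of the very theorem you are proving. Your proposed route through a resolution $\sheaf{O}^r\to\sheaf{O}^n\to\sheaf{S}$ and Theorem~\ref{thm_plain} does not obviously close the loop, because the lift of $\phi(\hat v)$ to $\Gamma(V_n,\sheaf{O}^n)$ is only well defined modulo $\Gamma(V_n,\ker p)$ and there is no a~priori reason a linear (let alone continuous) choice of lift exists globally on $V_n$.

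The paper bypasses this entirely with Lemma~\ref{lemma_epimorphisms_mono}: for a coherent $\sheaf{S}$ one can shrink $U$ so that the germ map $\sheaf{S}(U)\to\sheaf{S}_\zeta$ is injective. Granting this, each section $\eta(\hat v)\in\sheaf{S}(U)$ has zero germ at $\zeta$ (because $\eta_\zeta=0$), hence is the zero section on all of $U$, for every $v$ simultaneously; no uniformity argument in $v$ is needed at all. Then Lemma~\ref{lemma_im_is_small} and Krull finish exactly as you outlined. The point you are missing is thus a purely sheaf-theoretic fact about coherent sheaves (proved in the paper by induction on $m$ via Weierstrass preparation and finite direct images), not a functional-analytic one.
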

This theorem is a special case of a stronger, global, statement,
whose proof, however, involves a cohomology vanishing theorem for
infinitely generated sheaves.
Since the local version yields a few immediate applications, we will post-pone
the proof of the stronger theorem until Section 8.

The local statement is a simple consequence of Theorem \ref{thm_local_modules}
once the following two auxiliary statements are proved:

\begin{lemma}\label{lemma_epimorphisms_reduce_to_torsion}
Let $\rho:\sheaf{A}\rightarrow \sheaf{B}$ be a homomorphism of finite free
$\sheaf{O}$-modules over $\Omega\subset\mathbb{C}^m$ and $\zeta\in \Omega$. 
Denote by $Q$ the field of fractions of $\sheaf{O}_\zeta$.
Then, there are a finite free $\sheaf{O}$-module $\sheaf{C}$, a neighborhood $U$ of $\zeta$, and a homomorphism $\theta:\sheaf{B}|_U\rightarrow \sheaf{C}|_U$, such that
\begin{enumerate}[(i)]
\item $\ker \rho|_U=\ker (\theta\rho)|_U$,
\item $(\coker \theta\rho)_\zeta\otimes Q=0$.
\end{enumerate}
\end{lemma}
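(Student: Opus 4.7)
The plan is to carry out the construction of Lemma \ref{lemma_depth_lemmas_reduce_to_torsion} at the stalk $\zeta$, in a version with no depth hypothesis, and then spread the resulting homomorphism out to a neighborhood using coherence.

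At the stalk, write $R=\sheaf{O}_\zeta$, realize $\rho_\zeta$ as $\pi^*$ for some $\pi:R^n\to R^m$ with image $N$, and follow the argument of Lemma \ref{lemma_depth_lemmas_reduce_to_torsion} but taking the principal prime to be $p=0$ (so that $R_{(p)}=Q$). The role of the prime in that earlier argument was only to ensure positive $\maxid$-depth of $\coker\theta\rho$; since we do not seek that conclusion, the choice $p=0$ is permissible. Explicitly, pick a $Q$-basis $v_1,\ldots,v_k$ of $N\otimes Q$ lying in $N$, let $F=\bigoplus_j Rv_j\subset N$ (free over $R$, with $F\otimes Q=N\otimes Q$), lift to a map $\sigma:F\to R^n$ with $\pi\sigma=\iota:F\hookrightarrow R^m$, and set $C=\Hom(F,R)$, $\theta_\zeta=\sigma^*:\sheaf{B}_\zeta\to C$. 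The stalk versions of (i) and (ii) here correspond exactly to parts (i) and (iii) of Lemma \ref{lemma_depth_lemmas_reduce_to_torsion}, both of which go through for $p=0$: (i) reduces to $\ann_{\sheaf{A}_\zeta}F=\ann_{\sheaf{A}_\zeta}N$, which holds because $F\otimes Q=N\otimes Q$ and $\sheaf{A}_\zeta\hookrightarrow\sheaf{A}_\zeta\otimes Q$ (as $R$ is a domain), while (ii) is the dimension count of kernels and images after tensoring with $Q$.

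Next I would spread $\theta_\zeta$ to a homomorphism of sheaves. Fix a trivialization $\sheaf{B}\cong\sheaf{O}^n$ on some neighborhood $V$ of $\zeta$. The homomorphism $\theta_\zeta:\sheaf{O}_\zeta^n\to\sheaf{O}_\zeta^k$ is then described by $n$ germs in $\sheaf{O}_\zeta^k$, the images of the standard basis; representatives of all of these exist on a common neighborhood $U\subset V$, and they assemble into a homomorphism $\theta:\sheaf{B}|_U\to\sheaf{C}|_U$ with $\sheaf{C}=\sheaf{O}^k$, whose stalk at $\zeta$ is $\theta_\zeta$.

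Finally I would verify (i) on a possibly smaller neighborhood. The inclusion $\ker\rho\subseteq\ker(\theta\rho)$ holds automatically, and the reverse inclusion holds at $\zeta$ by construction. Both sheaves are coherent, so the quotient $\ker(\theta\rho)/\ker\rho$ is coherent with vanishing stalk at $\zeta$; its finitely many local generators each then vanish in some neighborhood, so the quotient vanishes in a common neighborhood, giving (i). Condition (ii) is about the stalk at $\zeta$, which the stalk-level construction already secures. The main point requiring care is the stalk construction itself and its verification with the choice $p=0$; everything else is routine sheafification.
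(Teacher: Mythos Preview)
Your proposal is correct and follows essentially the same route as the paper: both carry out the construction of Lemma \ref{lemma_depth_lemmas_reduce_to_torsion} at the stalk (the paper even writes ``We just repeat the proof of Lemma \ref{lemma_depth_lemmas_reduce_to_torsion}''), taking $F\subset N$ free with $F\otimes Q=N\otimes Q$, setting $\theta_\zeta=\sigma^*$, then spreading $\theta_\zeta$ to a neighborhood and invoking coherence of $\sker\rho$ and $\sker\theta\rho$ to shrink $U$ so that (i) holds. Your observation that choosing $p=0$ (i.e., working over $Q$ rather than a localization at a nonzero principal prime) suffices, because the depth conclusion of Lemma \ref{lemma_depth_lemmas_reduce_to_torsion} is not needed here, is exactly the simplification the paper makes implicitly.
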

\begin{proof}
We just repeat the proof of Lemma \ref{lemma_depth_lemmas_reduce_to_torsion}.
Let $\sheaf{N}=\simage \rho^*$, where 
\[
\rho^*:
\shom(\sheaf{B},\sheaf{O})\rightarrow\shom(\sheaf{A},\sheaf{O})
\]
 is the map dual
to $\rho$.
Since $\sheaf{N}_\zeta\otimes Q$ is a finite dimensional 
vector space
over $Q$, there is a finite free $\sheaf{O}_\zeta$-module 
$\sheaf{F}_\zeta\hookrightarrow \sheaf{N}_\zeta$ such that $\sheaf{F}_\zeta\otimes Q=N_\zeta\otimes Q$. Define 
$\sigma_\zeta: \sheaf{F}_\zeta \rightarrow 
		\Hom(\sheaf{B}_\zeta,\sheaf{O}_\zeta)$
by specifying its values on a free generator set so that
$\rho_\zeta^*\sigma_\zeta$ is the inclusion $\iota_\zeta:\sheaf{F}_\zeta\hookrightarrow\Hom(\sheaf{A}_\zeta,\sheaf{O}_\zeta)$.
Take 
\[
	\theta_\zeta=\sigma^*_\zeta:
 		\sheaf{B}_\zeta\approx 
		\Hom(\Hom(\sheaf{B}_\zeta,\sheaf{O}_\zeta),\sheaf{O}_\zeta)
 			\rightarrow
		\Hom(\sheaf{F}_\zeta,\sheaf{O}_\zeta)=\sheaf{C}_\zeta.
\]
Now, since $\sheaf{C}_\zeta$ is a free $\sheaf{O}_\zeta$-module, we can consider
it as a stalk of a sheaf $\sheaf{C}$ of finite free $\sheaf{O}$-modules.
The stalk homomorphism $\theta_\zeta:\sheaf{B}_\zeta\rightarrow\sheaf{C}_\zeta$
is induced by a homomorphism-valued holomorphic map $\bar\theta$ on some neighborhood $U$ of $\zeta$. 
Hence $\theta_\zeta$ extends to an  $\sheaf{O}$-homomorphism
$\theta:\sheaf{B}|_U\rightarrow\sheaf{C}|_U$. 

Since $\otimes$ is a right-exact covariant functor, 
\[
\image (\rho^*_\zeta\otimes\id_{Q})
	=\sheaf{N}_\zeta\otimes Q=\sheaf{F}_\zeta\otimes Q=\image (\iota_\zeta\otimes\id_{Q}),
\]
 and so,
\[
\ker (\rho_\zeta\otimes\id_{Q})=\ker (\iota^*_\zeta\otimes\id_{Q})=\ker (\theta\rho)_\zeta\otimes\id_{Q}.
\]
Noting that $\sheaf{A}_\zeta\rightarrow\sheaf{A}_\zeta\otimes Q$
is injective, $\ker \rho_\zeta=\ker (\theta\rho)_\zeta$. On the other hand
both $\sker \rho$ and $\sker\theta\rho$ are coherent sheaves, hence,
 after shrinking $U$ we may assume {(i)} holds.
We also note that
\[
\begin{split}
  \dim \sheaf{C}_\zeta\otimes Q=
  \dim \sheaf{F}_\zeta\otimes Q=
  \dim \sheaf{N}_\zeta\otimes Q=\\ 
  =\dim\, \image (\rho_\zeta^*\otimes\id_{Q})=
  \dim\, &\image (\rho_\zeta\otimes\id_{Q}).
\end{split}
\]
Consequently, the injectivity of the finite vector space homomorphism 
\[
\theta_\zeta\otimes\id_{Q}|_{
	\image(\rho_\zeta\otimes\id_{Q})}:
\image(\rho_\zeta\otimes\id_{Q})\hookrightarrow 
\sheaf{C}_\zeta\otimes Q
\]
implies its surjectivity, and {(ii)} follows immediately.
\end{proof}

\begin{lemma}\label{lemma_epimorphisms_mono}
Let $\sheaf{S}$ be a coherent sheaf over $\Omega$. If $\zeta\in\Omega$, then
there is a neighborhood $U\subset\Omega$ of $\zeta$ so that $\sheaf{S}(U)\rightarrow\sheaf{S}_\zeta$ is a monomorphism.
\end{lemma}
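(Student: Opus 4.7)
The approach is to reduce, by a Noetherian prime filtration of the stalk, to the case of a cyclic coherent sheaf supported on an irreducible analytic set through $\zeta$, and then invoke the identity principle.

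I would begin by shrinking $\Omega$ to a Stein polydisk $U$ around $\zeta$ on which $\sheaf{S}$ admits a finite free presentation. Since $\sheaf{O}_\zeta$ is Noetherian, the stalk $\sheaf{S}_\zeta$ has a filtration $0=M_0\subset M_1\subset\cdots\subset M_k=\sheaf{S}_\zeta$ with cyclic quotients $M_i/M_{i-1}\cong\sheaf{O}_\zeta/p_i$ for some primes $p_i\subset\sheaf{O}_\zeta$. After possibly shrinking $U$, each $M_i$ (being finitely generated) extends to a coherent subsheaf $\sheaf{M}_i\subset\sheaf{S}|_U$ with $(\sheaf{M}_i)_\zeta=M_i$: pick a section representative of a generator added at each stage and form the subsheaf generated, inductively. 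The successive quotients $\sheaf{T}_i=\sheaf{M}_i/\sheaf{M}_{i-1}$ are then cyclic coherent sheaves with stalk $\sheaf{O}_\zeta/p_i$ at $\zeta$, and after a further shrinking of $U$ their supports on $U$ are precisely the irreducible analytic representatives of the germs cut out by $p_i$, this being possible since each $p_i$ is prime and so defines an irreducible analytic germ at $\zeta$.

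Injectivity of $\sheaf{S}(U)\to\sheaf{S}_\zeta$ is then proved by induction on $k$. For the inductive step, Cartan's Theorem B yields the exact sequence of sections $0\to\sheaf{M}_{i-1}(U)\to\sheaf{M}_i(U)\to\sheaf{T}_i(U)\to 0$ on Stein $U$; placed alongside the corresponding stalk sequence at $\zeta$ in a commutative diagram, a four-lemma diagram chase reduces injectivity for $\sheaf{M}_i$ to injectivity for $\sheaf{M}_{i-1}$ (known by induction) and for $\sheaf{T}_i$. The base case is the injectivity of $\sheaf{T}_i(U)\to(\sheaf{T}_i)_\zeta$: a section $t$ with $t_\zeta=0$ vanishes on a neighborhood of $\zeta$, and since $\sheaf{T}_i$ is supported on an irreducible analytic set containing $\zeta$, the classical identity principle on irreducible analytic sets (with Weierstrass division handling any non-reduced structure by reducing to coordinate subspaces) propagates this vanishing to all of $U$, forcing $t=0$.

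The main obstacle is the base case itself, i.e.\ the identity principle for coherent sheaves supported on irreducible analytic sets, which requires combining the identity principle for holomorphic functions on irreducible analytic sets with a careful iterative argument via Weierstrass preparation to handle non-reduced stalks. The rest is standard homological algebra assembled from Cartan's Theorem B and the diagram chase.
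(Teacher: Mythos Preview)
Your approach via a prime filtration is correct and genuinely different from the paper's. The paper proceeds by induction on the ambient dimension $m$: it first treats the case where $\sheaf{S}_\zeta$ is a torsion module (annihilated by some nonzero $h$) by taking $h$ to be a Weierstrass polynomial and pushing $\sheaf{S}$ forward along the finite projection to $\mathbb{C}^{m-1}$, then reduces the general case to this torsion case by means of Lemma~\ref{lemma_epimorphisms_reduce_to_torsion}. Your route instead filters the stalk by primes, extends the filtration to coherent subsheaves, and reduces via Theorem~B and a diagram chase to cyclic sheaves $\sheaf{O}/\sheaf{J}$ with $\sheaf{J}_\zeta$ prime. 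Both arguments ultimately rest on the identity principle, but yours invokes it once for each subquotient, while the paper folds it into the dimension-reduction machinery; your argument is closer in spirit to standard d\'evissage in algebraic geometry, the paper's to classical several-variable function theory.

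One point in your base case deserves tightening. You worry about ``non-reduced structure'' and gesture at Weierstrass division, but the cleaner and more honest fix is this: since $\sheaf{J}_\zeta = p_i$ is prime (hence radical) and the radical $\sqrt{\sheaf{J}}$ of a coherent ideal sheaf is again coherent, the quotient $\sqrt{\sheaf{J}}/\sheaf{J}$ is coherent with zero stalk at $\zeta$ and therefore vanishes on a neighborhood. After this shrinking, $\sheaf{J}$ is radical everywhere on $U$, so $\sheaf{T}_i \cong \sheaf{O}/\sheaf{J}$ is literally the reduced structure sheaf of the irreducible set $A_i$, and the classical identity principle for holomorphic functions on an irreducible analytic set applies with no further complications. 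As written, your ``iterative Weierstrass'' remark is too vague to count as a proof of this step.
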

\begin{proof}
We follow the outline given by the proof of 
 Theorem \ref{thm_local_modules}. 
The lemma holds trivially for the sheaves over $\Omega=\mathbb{C}^0=0$.
For $\Omega$ lying in higher dimensions we proceed by induction.
Initially,
we verify 
the inductive step for torsion
modules; then, the general case is proved by reduction to a torsion case. 

We are free to take $\zeta=0$.
As before $Q$ denotes the field of quotients of $\sheaf{O}_0$.
Suppose $\Omega\subset\mathbb{C}^m$, $m\geq 1$, and $\sheaf{S}_0\otimes
Q=0$. 
Then $\sheaf{S}_0$ is annihilated by a nonzero $h\in\sheaf{O}_0$.
After choosing a suitable neighborhood 
$U=U'\times U''\subset\mathbb{C}^{m-1}\times\mathbb{C}$ of $0$
to be reduced further later, 
we can take $h$ to be a Weierstrass
polynomial of degree $d\geq 1$ in $z_m$ and $h\sheaf{S}|_U=0$. 
We write $z=(z',z_m)$ and $\sheaf{O}'$
for the sheaf of germs in $\mathbb{C}^{m-1}$. 

Let $|A|=\{z\in U'\times U'': h(z)=0\}$,  $|B|=U'\times \{0\}$, and 
$\sheaf{O}_A=(\sheaf{O}/h\sheaf{O})|_{|A|}$.
Define complex spaces $A=(|A|,\sheaf{O}_A)$ and $B=(|B|,\sheaf{O}')$. 
Since $h\sheaf{S}|_{U}=0$, $\supp \sheaf{S}|_U\subset|A|$ and
$\sheaf{S}|_{|A|}$ has a structure of an $\sheaf{O}_A$-module.  
If 
$\sheaf{O}^r|_V\xrightarrow{\rho}\sheaf{O}^n|_V\xrightarrow{p}\sheaf{S}|_V\rightarrow 0$ 
is an exact sequence
for some $V\subset U$,
then 
the induced sequence 
$\sheaf{O}_A^r|_{|A|\cap V}\xrightarrow{\rho}\sheaf{O}_A^n|_{|A|\cap V}
\xrightarrow{p}\sheaf{S}|_{|A|\cap V}\rightarrow 0$
is also exact. So, $\sheaf{S}|_{|A|}$ is also $\sheaf{O}_A$-coherent.  
The projection $U'\times U'' \rightarrow U'$ induces a holomorphic Weierstrass map 
$\pi:A\rightarrow B$, see \cite[Section 2.3.4]{GR}. Since $\pi$ is a finite map,
the direct image sheaf $\pi_*(\sheaf{S}|_{|A|})$ is a coherent sheaf
 over $U'\subset\mathbb{C}^{m-1}$.

Inductively we can assume that $U'$ is such that 
$\pi_*(\sheaf{S}|_{|A|})(U')\rightarrow\pi_*(\sheaf{S}|_{|A|})_0$ 
is a monomorphism.
On the other hand 
\[
\pi_*(\sheaf{S}|_{|A|})(U')=\sheaf{S}|_{|A|}(\pi^{-1}U')
=\sheaf{S}(U), \text{ and } \pi_*(\sheaf{S}|_{|A|})_0=\prod_{\xi\in\pi^{-1}(0)}\sheaf{S}_\xi=\sheaf{S}_0,
\]
see \cite[Section 2.3.3]{GR}. So, $\sheaf{S}(U)\rightarrow\sheaf{S}_0$ is a monomorphism.

Now consider a general coherent sheaf $\sheaf{S}$. On some neighborhood
$U$ of $\zeta$ there exists an exact sequence 
$
\sheaf{O}^r\xrightarrow{\rho}\sheaf{O}^n\xrightarrow{p} \sheaf{S}|_U\rightarrow 0.
$
This neighborhood $U$ can be taken so that there exists
 a homomorphism $\theta:\sheaf{O}^n|_U\rightarrow\sheaf{O}^s|_U$ as in Lemma
\ref{lemma_epimorphisms_reduce_to_torsion}. Since $(\scoker \theta\rho)_\zeta$
is a torsion $\sheaf{O}_\zeta$-module, we can apply the first part of the proof
to assume that
\begin{equation}\label{eq_epimorphisms_coker_mono}
(\scoker \theta\rho)(U)\rightarrow(\scoker\theta\rho)_\zeta 
\text{ is a monomorphism.}
\end{equation}
Furthermore, we can take $U$ to be a pseudoconvex domain.

Suppose $s\in\sheaf{S}(U)$ and $s_\zeta=0$. Let $v\in\sheaf{O}^n(U)$ be such
that $p(v)=s$. Then $v_\zeta\in\image \rho_\zeta$ and
the residue of $\theta_\zeta v_\zeta$ 
 vanishes in 
$(\scoker \theta\rho)_\zeta$.  
 By \eqref{eq_epimorphisms_coker_mono}, $\theta v$ represents a zero
section in $\scoker \theta\rho|_U$, i.e., there is $w\in\sheaf{O}^r(U)$ with 
$\theta v=\theta\rho w$. Since $\sker \rho|_U=\sker \theta\rho|_U$,
$v=\rho w$ and, thus, $s=0$ 
\end{proof}
\begin{proof}[Proof of Theorem \ref{thm_local_thm}]
Let $\phi:\sheaf{O}^E\rightarrow\sheaf{S}$ be an $\sheaf{O}$-homomorphism.
If 
$\zeta\in\Omega$, then 
according to Theorem \ref{thm_local_modules}  there is
a plain homomorphism 
$\psi^\zeta:\sheaf{O}^E_\zeta\rightarrow \sheaf{O}^n_\zeta$ 
so that 
\begin{equation} \label{eq_local_phi_p_psi}
	\phi|_{\sheaf{S}_\zeta}=p \psi^\zeta. 
\end{equation}
Since $\psi^\zeta$ is induced by a homomorphism-valued holomorphic map,
$\psi^\zeta$ 
extends to a plain homomorphism 
$\psi_U:\sheaf{O}^E|_U\rightarrow\sheaf{O}^n|_U$
for a neighborhood  $U\subset\Omega$  of $\zeta$.
By Lemma \ref{lemma_epimorphisms_mono}, we can assume that 
$\sheaf{S}(U)\rightarrow\sheaf{S}_\zeta$ is a monomorphism. 
In conjunction with \eqref{eq_local_phi_p_psi}, this implies that 
$\phi(v)-p\psi_U(v)=0$ for $v\in \sheaf{O}^E(U)$, in particular,  
for $v$ a constant section. Then, an application of Lemma 
\ref{lemma_im_is_small} shows that
 $\image (\phi_{\zeta'}-(p\psi_U)_{\zeta'})\subset \bigcap_{j=0}^\infty \maxid_{\zeta'}\sheaf{S}_{\zeta'}=0$ for $\zeta'\in U$, i.e.,
that $\phi$ factors through $p$ on $U$.
\end{proof}

\section{ Applications}
Our first application is Corollary \ref{cor_non_free}. It depends on the 
following
\begin{proposition}\label{prop_applications_seq}
Suppose $(R,\maxid)$ is a local ring with the residue field
$k=R/\maxid$, and $M$ is a free $R$-module. If
$c_\nu\in k$ for $\nu\in \mathbb{N}$   and $e_\nu\in M$ are such that
their classes $\bar e_\nu$ in $M/\maxid M$ are linearly independent over $k$
then there is an $R$-homomorphism $\phi:M\rightarrow R$ such that
for every $\nu$ the class of $\phi(e_\nu)$ in $k$ is $c_\nu$.
\end{proposition}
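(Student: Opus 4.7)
The plan is to reduce the problem to linear algebra over the residue field $k$ by passing to the quotient $M/\maxid M$, and then lift back to $M$ using the universal property of free modules.

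First, I would produce a $k$-linear functional $\bar{\phi}\colon M/\maxid M\to k$ satisfying $\bar{\phi}(\bar e_\nu)=c_\nu$ for every $\nu$. Since $M$ is a free $R$-module, $M/\maxid M$ is a $k$-vector space (with basis the reductions of any $R$-basis of $M$). The hypothesis is precisely that the $\bar e_\nu$ are a linearly independent family in this vector space, so by a standard Zorn argument I can extend $\{\bar e_\nu\}$ to a $k$-basis of $M/\maxid M$, and then define $\bar\phi$ on that basis by sending $\bar e_\nu\mapsto c_\nu$ and sending every other basis vector to $0$, extending $k$-linearly.

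Second, I would lift $\bar\phi$ to an $R$-homomorphism $\phi\colon M\to R$. Fix an $R$-basis $(f_\alpha)_{\alpha\in A}$ of $M$; then $(\bar f_\alpha)$ is a $k$-basis of $M/\maxid M$. For each $\alpha$, choose a lift $r_\alpha\in R$ of the residue class $\bar{\phi}(\bar f_\alpha)\in k$. By the universal property of the free module $M$, there is a unique $R$-linear map $\phi\colon M\to R$ with $\phi(f_\alpha)=r_\alpha$ for every $\alpha$.

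The verification is then immediate: the composition $M\xrightarrow{\phi} R\to R/\maxid=k$ agrees with $\bar\phi$ composed with the canonical projection $M\to M/\maxid M$, because both $k$-linear maps take the same values on the basis $(\bar f_\alpha)$. In particular, $\phi(e_\nu)+\maxid=\bar\phi(\bar e_\nu)=c_\nu$, as required. I do not anticipate a real obstacle; the only subtlety worth flagging is the (mild) use of the axiom of choice to extend the countable independent set $\{\bar e_\nu\}$ to a full basis of the possibly uncountable-dimensional vector space $M/\maxid M$, and to choose simultaneous lifts $r_\alpha\in R$ of $\bar\phi(\bar f_\alpha)$.
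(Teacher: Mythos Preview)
Your proposal is correct and follows essentially the same route as the paper: first build a $k$-linear functional $\bar\phi$ on $M/\maxid M$ taking the prescribed values on the independent family $(\bar e_\nu)$, then use the freeness of $M$ to lift along the surjection $R\twoheadrightarrow k$. The paper phrases the second step slightly more abstractly (compose $\bar\phi$ with the projection $M\to M/\maxid M$ to get $\psi:M\to k$, then lift $\psi$ to $\phi:M\to R$ using that free modules are projective), while you make the lift explicit via a chosen basis; these are the same argument.
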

\begin{proof}
Let $\bar\phi:M/\maxid M\rightarrow k$ be a $k$-linear map such that
$\bar\phi(\bar e_\nu)=c_\nu$. Composing $\bar\phi$ with the projection $M\rightarrow M/\maxid M$ we obtain an $R$-homomorphism $\psi:M\rightarrow k$
such that $\psi(e_\nu)=c_\nu$. If $M$ is free then 
$\psi$ can be lifted to a $\phi:M\rightarrow R$ as required.
\end{proof}

\begin{proof}[Proof of Corollary \ref{cor_non_free}]
Let $e_\nu\in\sheaf{O}_\zeta^E$ be germs such that $e_\nu(\zeta)\in E$
are $\mathbb{C}$-linearly independent unit vectors.
Any $\sheaf{O}_\zeta$-homomorphism 
$\phi:\sheaf{O}^E_\zeta\rightarrow\sheaf{O}_\zeta$
is plain by Theorem \ref{thm_local_modules}, whence $\phi(e_\nu)(\zeta)\in\mathbb{C}$
is a bounded sequence. If $\sheaf{O}^E_\zeta$ were a submodule of a 
free module $M$ then by Proposition \ref{prop_applications_seq}
there would exist a homomorphism $\sheaf{O}_\zeta^E \rightarrow\sheaf{O}_\zeta$
such that $\phi(e_\nu)(\zeta)=\nu$, a contradiction.
\end{proof}

For further applications we have to review some concepts introduced in 
\cite{LP}.
As there, in this review we place ourselves in an open subset $\Omega$ of a
Banach space $X$; but our applications will only concern finite
dimensional $X$.

In the Introduction we have already defined plain sheaves and homomorphisms.
For sheaves $\sheaf{A},\sheaf{B}$ of $\sheaf{O}$-modules (always over $\Omega$)
we write 
$\shomo(\sheaf{A},\sheaf{B})$ for the sheaf of $\sheaf{O}$-homomorphisms 
between them; if $\sheaf{A}$ and $\sheaf{B}$ are plain sheaves we write
$\shomp(\sheaf{A},\sheaf{B})\subset\shomo(\sheaf{A},\sheaf{B})$ for the sheaf of plain homomorphisms.

\begin{definition}\label{defn_applications_anal_str}
An analytic structure on a sheaf $\sheaf{S}$
is the choice, for each plain sheaf $\sheaf{E}$, of a submodule 
$\shom(\sheaf{E},\sheaf{S})\subset\shomo(\sheaf{E},\sheaf{S})$ subject to 
\begin{itemize}
\item If $\sheaf{E},\sheaf{F}$ are plain sheaves, $x\in\Omega$, and 
	$\varphi\in \shomp(\sheaf{E},\sheaf{F})_x$, then\\
	$\varphi^*\shom(\sheaf{F},\sheaf{S})_x
	 \subset\shom(\sheaf{E},\sheaf{S})_x$; and
\item $\shom(\sheaf{O},\sheaf{S})=\shomo(\sheaf{O},\sheaf{S})$.
\end{itemize}
\end{definition}

If $\sheaf{S}$ is endowed with an analytic structure, one also says that 
$\sheaf{S}$ is an analytic sheaf. This terminology is different from
the traditional one, where ``analytic sheaves'' and ``sheaves of $\sheaf{O}$-modules''  mean one and the same thing.

If $U\subset\Omega$ is open, an $\sheaf{O}$-homomorphism 
$\psi:\sheaf{S}|_U\rightarrow\sheaf{S}'|_U$ of analytic sheaves
is called analytic if 
$\psi_*\shom(\sheaf{E}|_U,\sheaf{S}|_U)\subset
\shom(\sheaf{E}|_U,\sheaf{S}'|_U)$
for every plain sheaf
$\sheaf{E}$.

Any plain sheaf $\sheaf{F}$ has a canonical analytic structure 
given by
$\shom(\sheaf{E},\sheaf{F})=\shomp(\sheaf{E},\sheaf{F})$. 
Further, on any $\sheaf{O}$-module $\sheaf{S}$ one can define a
``maximal'' analytic structure by $\shom(\sheaf{E},\sheaf{S})=\shomo(\sheaf{E},\sheaf{S})$;
and also a ``minimal'' analytic structure, denoted by $\shommin(\sheaf{E},\sheaf{S})$,
consisting of germs $\alpha$ that can be written  as a composition $\beta\gamma$ 
of 
\[
\gamma\in\shomp(\sheaf{E},\sheaf{O}^n)\text{ and }\beta\in\shomo(\sheaf{O}^n,\sheaf{S}),
\]
where $n<\infty$. Definition \ref{defn_applications_anal_str} implies
that 
\[
	\shommin(\sheaf{E},\sheaf{S})\subset
	\shom(\sheaf{E},\sheaf{S})\subset
	\shomo(\sheaf{E},\sheaf{S}).
\]
In view of Theorems \ref{thm_plain} and \ref{thm_local_thm} we obtain 
the following uniqueness results
\begin{theorem}
For every plain sheaf $\sheaf{O}^F$ over  an open  $\Omega\subset\mathbb{C}^m$, $0<m<\infty$, the canonical and the maximal analytic structures coincide.
\end{theorem}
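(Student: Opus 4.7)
The plan is to reduce the claim to Theorem \ref{thm_plain} by unpacking the definitions of the two analytic structures. By Definition \ref{defn_applications_anal_str}, both structures on the plain sheaf $\sheaf{O}^F$ are specified by choosing, for each plain sheaf $\sheaf{E} = \sheaf{O}^E$, a subsheaf of $\shomo(\sheaf{E},\sheaf{O}^F)$. The canonical structure selects $\shomp(\sheaf{E},\sheaf{O}^F)$ (this is indeed a legal analytic structure, as noted after Definition \ref{defn_applications_anal_str}), while the maximal one selects the entire $\shomo(\sheaf{E},\sheaf{O}^F)$. Since the inclusion $\shomp(\sheaf{E},\sheaf{O}^F)\subset\shomo(\sheaf{E},\sheaf{O}^F)$ always holds, only the reverse inclusion needs to be verified.

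To verify it, I would check equality at the level of stalks. Fix $\zeta\in\Omega$ and let $\alpha\in\shomo(\sheaf{O}^E,\sheaf{O}^F)_\zeta$ be a germ at $\zeta$. By the definition of the Hom sheaf, $\alpha$ is represented by an $\sheaf{O}|_U$-homomorphism $\sheaf{O}^E|_U\rightarrow\sheaf{O}^F|_U$ defined on some open neighborhood $U\subset\Omega$ of $\zeta$. Since $U$ is itself an open subset of $\mathbb{C}^m$ with $0<m<\infty$, Theorem \ref{thm_plain} applies and shows that this homomorphism is plain; that is, it is induced by a holomorphic $\Hom(E,F)$-valued map on $U$. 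Consequently $\alpha$ lies in $\shomp(\sheaf{O}^E,\sheaf{O}^F)_\zeta$, which gives the reverse inclusion stalkwise, hence as subsheaves.

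There is no real obstacle here: once the sheaf-theoretic definitions are laid out, the statement is essentially a sheafified restatement of Theorem \ref{thm_plain}. The only mild point to keep straight is that a germ of $\shomo$ at $\zeta$ is represented by a genuine homomorphism over an open neighborhood of $\zeta$, so that Theorem \ref{thm_plain}, which is phrased globally over an open set, can be applied verbatim.
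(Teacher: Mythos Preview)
Your proof is correct and follows essentially the same route as the paper: both arguments amount to observing that a section of $\shomo(\sheaf{O}^E,\sheaf{O}^F)$ over an open set $U$ is an $\sheaf{O}$-homomorphism $\sheaf{O}^E|_U\to\sheaf{O}^F|_U$, to which Theorem~\ref{thm_plain} applies directly to yield plainness. The only cosmetic difference is that you phrase the verification stalkwise while the paper works with sections over open sets; these are equivalent here.
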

\begin{proof}
Let $E$ be a Banach space, $U\subset\Omega$ an open set, and 
$\phi:\sheaf{O}^E|_U\rightarrow\sheaf{O}^F|_U$ an $\sheaf{O}$-homomorphism. 
By Theorem \ref{thm_plain},
$\phi$ is a plain homomorphism, and hence, an analytic homomorphism for the
canonical analytic structure.
Thus, $\shom(\sheaf{O}^E,\sheaf{O}^F)=\shomo(\sheaf{O}^E,\sheaf{O}^F)$.
\end{proof}
\begin{theorem}\label{thm_coh_unique_anal_str}
Let $\sheaf{S}$ be a coherent sheaf such that $\depth{\maxid_\zeta} \sheaf{S}_\zeta>0$ for
$\zeta\in\supp \sheaf{S}$. Then the minimal and the maximal analytic structures
coincide, i.e., $\sheaf{S}$ has unique analytic structure.
\end{theorem}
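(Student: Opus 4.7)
Since one always has the inclusions
\[
\shommin(\sheaf{O}^E,\sheaf{S}) \subset \shom(\sheaf{O}^E,\sheaf{S}) \subset \shomo(\sheaf{O}^E,\sheaf{S})
\]
for any analytic structure on $\sheaf{S}$, it suffices to prove that the outer two sheaves agree; once this is established, the intermediate $\shom$ is squeezed between equal sheaves and so coincides with both. The strategy, carried out on stalks, is to exhibit any given $\alpha \in \shomo(\sheaf{O}^E,\sheaf{S})_\zeta$ as a composition of a plain homomorphism into some $\sheaf{O}^n$ and an $\sheaf{O}$-homomorphism $\sheaf{O}^n \to \sheaf{S}$; this is exactly what membership in $\shommin$ demands.

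The argument is a direct assembly of the two main theorems proved earlier. Fix $\zeta \in \Omega$ and a germ $\alpha \in \shomo(\sheaf{O}^E,\sheaf{S})_\zeta$, represented by an $\sheaf{O}$-homomorphism $\alpha \colon \sheaf{O}^E|_U \to \sheaf{S}|_U$ on some neighborhood $U$ of $\zeta$. If $\zeta \notin \supp \sheaf{S}$, then coherence of $\sheaf{S}$ implies $\sheaf{S}$ vanishes on a neighborhood of $\zeta$; after shrinking $U$, $\alpha$ is the zero homomorphism and the factorization is trivial.

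If instead $\zeta \in \supp \sheaf{S}$, then the hypothesis gives $\depth{\maxid_\zeta}\sheaf{S}_\zeta > 0$. By coherence, after a further shrinking we may assume there is an epimorphism $p \colon \sheaf{O}^n|_U \to \sheaf{S}|_U$ for some finite $n$. The depth assumption holds for every stalk of $\sheaf{S}$ (since stalks outside the support are zero and hence satisfy it vacuously), so Theorem \ref{thm_local_thm} applies and yields a neighborhood $V \subset U$ of $\zeta$ together with an $\sheaf{O}$-homomorphism $\psi \colon \sheaf{O}^E|_V \to \sheaf{O}^n|_V$ satisfying $\alpha|_V = p|_V \circ \psi$.

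The last ingredient is Theorem \ref{thm_plain}, which guarantees that any $\sheaf{O}$-homomorphism between plain sheaves on an open subset of $\mathbb{C}^m$ is plain; applied to $\psi$, it shows $\psi \in \shomp(\sheaf{O}^E,\sheaf{O}^n)(V)$. Passing to germs at $\zeta$ produces the required factorization $\alpha_\zeta = p_\zeta \psi_\zeta$ with $\psi_\zeta$ plain and $p_\zeta \in \shomo(\sheaf{O}^n,\sheaf{S})_\zeta$, so $\alpha_\zeta \in \shommin(\sheaf{O}^E,\sheaf{S})_\zeta$. Since $\zeta$ and $E$ were arbitrary, the reverse inclusion $\shomo(\sheaf{O}^E,\sheaf{S}) \subset \shommin(\sheaf{O}^E,\sheaf{S})$ holds, completing the proof. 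There is no serious obstacle here: all the analytic and algebraic content has been absorbed into Theorems \ref{thm_plain} and \ref{thm_local_thm}, and what remains is only the formal concatenation of their conclusions.
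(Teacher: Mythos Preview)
Your proof is correct and follows essentially the same route as the paper: reduce to showing $\shomo = \shommin$ at each stalk, obtain a local epimorphism $p:\sheaf{O}^n\to\sheaf{S}$ by coherence, factor $\alpha$ through $p$ using Theorem~\ref{thm_local_thm}, and then apply Theorem~\ref{thm_plain} to see that the resulting $\psi$ is plain. The only cosmetic difference is that the paper disposes of the case $m=0$ explicitly at the outset (where the depth hypothesis forces $\sheaf{S}=0$), whereas you absorb it into your case split on whether $\zeta\in\supp\sheaf{S}$.
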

\begin{proof}
Denote by $\Omega\subset\mathbb{C}^m$ the base of the sheaf $\sheaf{S}$.
Let $E$ be a Banach space,  $U\subset\Omega$ an open set, and 
$\phi:\sheaf{O}^E|_U\rightarrow\sheaf{O}^F|_U$ an $\sheaf{O}$-homomorphism.
If $m=0$, the depth condition guarantees that $\sheaf{S}=0$ and the conclusion
of the theorem
follows. So, we may assume that $m\geq 1$.

Since $\sheaf{S}$ is a coherent sheaf, given $\zeta\in U$ there is 
an epimorphism $p:\sheaf{O}^n|_V\rightarrow\sheaf{S}|_V$, with $n<\infty$ and
$V\subset U$, a suitable neighborhood of $\zeta$.
By Theorem \ref{thm_local_thm}, we can assume that  $\phi|_V$
factors through $p|_V$, i.e., 
there is an $\sheaf{O}$-homomorphism 
$\psi:\sheaf{O}^E|_V\rightarrow\sheaf{O}^n|_V$ with $\phi|_V=p|_V\psi$.
Then, by Theorem \ref{thm_plain}, $\psi$ is a plain homomorphism, and so, 
$\phi_\zeta\in\shommin(\sheaf{O}^E,\sheaf{O}^F)_\zeta$. Since 
$\phi$ and $\zeta$ were arbitrary, it follows that $\shommin(\sheaf{O}^E,\sheaf{O}^F)=\shomo(\sheaf{O}^E,\sheaf{O}^F)$. 
\end{proof}

\section{ Epimorphisms on Coherent Sheaves }

\begin{theorem}\label{thm_coh_global}
Let $\sheaf{S}$ be a coherent sheaf over an open pseudoconvex 
$\Omega\subset\mathbb{C}^m$, and $E$ be a Banach space.
Suppose 
 $\depth{\maxid_\zeta}\sheaf{S}_\zeta>0$ 
for
$\zeta\in\supp \sheaf{S}$.  
If $p:\sheaf{O}^n\rightarrow\sheaf{S}$
is an epimorphism, then any $\sheaf{O}$-homomorphism $\sheaf{O}^E\rightarrow\sheaf{S}$ 
factors through it.
\end{theorem}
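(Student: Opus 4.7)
The plan is to combine the local factorization result of Theorem~\ref{thm_local_thm} with a \v{C}ech-theoretic patching argument, invoking the cohomology vanishing theorem for infinitely generated sheaves alluded to in the Introduction.

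First I would apply Theorem~\ref{thm_local_thm} at every point of $\Omega$ to obtain an open cover $\{U_\alpha\}$ of $\Omega$ together with $\sheaf{O}$-homomorphisms $\psi_\alpha:\sheaf{O}^E|_{U_\alpha}\rightarrow\sheaf{O}^n|_{U_\alpha}$ satisfying $p\psi_\alpha=\phi|_{U_\alpha}$; by Theorem~\ref{thm_plain} each $\psi_\alpha$ is automatically plain. Setting $\sheaf{K}:=\sker p$, which is a coherent subsheaf of $\sheaf{O}^n$ because $\sheaf{S}$ is coherent, the differences $\psi_\alpha-\psi_\beta$ are killed by $p$ on $U_\alpha\cap U_\beta$ and so factor through $\sheaf{K}$. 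They therefore constitute a \v{C}ech $1$-cocycle with values in the sheaf $\sheaf{H}:=\shomo(\sheaf{O}^E,\sheaf{K})$.

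The crucial step is to show that $H^1(\Omega,\sheaf{H})=0$. The sheaf $\sheaf{H}$ is not coherent, but it sits naturally inside the plain sheaf $\shomo(\sheaf{O}^E,\sheaf{O}^n)$, which by Theorem~\ref{thm_plain} is isomorphic to $\sheaf{O}^{\Hom(E,\mathbb{C}^n)}$. From a coherent presentation of $\sheaf{K}$ one manufactures a presentation of $\sheaf{H}$ by plain sheaves of the above form, realising $\sheaf{H}$ as a cohesive sheaf in the sense of Lempert--Patyi. The unpublished vanishing theorem of Lempert for cohesive sheaves over pseudoconvex open subsets of $\mathbb{C}^m$ then yields $H^1(\Omega,\sheaf{H})=0$, possibly after refining the cover to ensure the \v{C}ech cohomology of the cover agrees with sheaf cohomology.

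Given the vanishing, the cocycle $(\psi_\alpha-\psi_\beta)$ is a coboundary: there exist sections $\chi_\alpha\in\sheaf{H}(U_\alpha)$ with $\chi_\alpha-\chi_\beta=\psi_\alpha-\psi_\beta$ on each overlap. Viewing each $\chi_\alpha$ as a homomorphism into $\sheaf{O}^n$ via the inclusion $\sheaf{K}\hookrightarrow\sheaf{O}^n$, the local homomorphisms $\psi_\alpha-\chi_\alpha$ coincide on overlaps and glue to a global $\sheaf{O}$-homomorphism $\psi:\sheaf{O}^E\rightarrow\sheaf{O}^n$. Since $p\chi_\alpha=0$, we have $p\psi|_{U_\alpha}=p\psi_\alpha=\phi|_{U_\alpha}$, so $p\psi=\phi$ globally, as required. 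The main obstacle is precisely the invocation of the cohomology vanishing result for cohesive sheaves, which is the external input flagged in the Introduction; once that input is granted, the remaining argument is the routine \v{C}ech gluing sketched above.
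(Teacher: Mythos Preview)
Your overall strategy matches the paper's: apply Theorem~\ref{thm_local_thm} to get local lifts $\psi_\alpha$, form the \v{C}ech $1$-cocycle $\psi_\alpha-\psi_\beta$ with values in the coherent kernel $\sheaf{K}=\sker p$, kill it by a cohomology vanishing result, and glue. The final patching paragraph is fine.

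The gap is in the middle paragraph, where you assert that ``from a coherent presentation of $\sheaf{K}$ one manufactures a presentation of $\sheaf{H}$ by plain sheaves'' and hence that $\sheaf{H}=\shomo(\sheaf{O}^E,\sheaf{K})$ is cohesive. This does not follow from a presentation $\sheaf{O}^r\to\sheaf{O}^s\to\sheaf{K}\to 0$ alone: $\shomo(\sheaf{O}^E,\,\cdot\,)$ is only left exact, so you would need to know that every local $\sheaf{O}$-homomorphism $\sheaf{O}^E\to\sheaf{K}$ lifts to $\sheaf{O}^s$. That is precisely Theorem~\ref{thm_local_thm} applied to $\sheaf{K}$, and it requires $\depth{\maxid_\zeta}\sheaf{K}_\zeta>0$ on $\supp\sheaf{K}$, which you have not checked. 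The paper supplies exactly this missing observation: $\sheaf{K}\subset\sheaf{O}^n$ is torsion-free, hence of positive depth on its support by Proposition~\ref{prop_depth_lemmas_depth_better}. With that in hand the paper invokes Theorem~\ref{thm_coh_unique_anal_str} to identify $\shomo(\sheaf{O}^E,\sheaf{K})$ with the analytic $\shom(\sheaf{O}^E,\sheaf{K})$ for the minimal structure, then uses Lempert's result that a coherent sheaf with its minimal analytic structure is cohesive, and finally the vanishing theorem $H^1(\Omega,\shom(\sheaf{O}^E,\sheaf{K}))=0$ from \cite{LP}. Once you insert the torsion-free/positive-depth remark for $\sheaf{K}$, your argument becomes essentially the paper's.
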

\begin{proof}[Proof of Theorem \ref{thm_coh_global}]
This proof is based on the following key fact \cite[Theorem 4.3]{Lempert} due to
Lempert:
a coherent sheaf over $\Omega\subset\mathbb{C}^m$ endowed with its minimal 
analytic structure is cohesive. While \cite{Lempert} makes references to some
of the results of the present paper,  
the proof of \cite[Theorem 4.3]{Lempert} is 
independent of Theorem \ref{thm_coh_global}.

Let $\phi:\sheaf{O}^E\rightarrow\sheaf{S}$ be an $\sheaf{O}$-homomorphism.
In view of Theorem \ref{thm_local_thm}, there is an open pseudoconvex cover $\mathfrak{V}$ of $\Omega$
such that on each $V\in\mathfrak{V}$ there is a homomorphism 
$\psi_V:\sheaf{O}^E|_V\rightarrow \sheaf{O}^n|_V$ with $\phi|_V=p\psi|_V$.
If we let $\sheaf{K}=\sker p$, a coherent sheaf,
then $\psi_{VW}=\psi_V-\psi_W$ maps $\sheaf{O}^E_{V\cap W}$
into $\sheaf{K}$, for $V,W\in\mathfrak{V}$.
Thus, the $\sheaf{O}$-homomorphisms $\psi_{VW}$ form a $\sheaf{K}$-valued
$1$-cocycle.

We can assume that $m\geq 1$, for otherwise, the depth condition implies that $\sheaf{S}$ is a zero-sheaf and there is
nothing to prove.
The module 
$\sheaf{K}\subset\sheaf{O}^n$ is torsion-free, i.e., 
$r_\zeta k_\zeta\not=0$
for $\zeta\in\Omega$, $r_\zeta\in\sheaf{O}_\zeta$, and $k_\zeta\in\sheaf{K}_\zeta$,
unless $r_\zeta=0$ or $k_\zeta=0$. Therefore, 
in view of 
Lemma \ref{prop_depth_lemmas_depth_better},
$\depth{\maxid_\zeta} \sheaf{K}_\zeta>0$ for all $\zeta\in\supp \sheaf{K}$.
We endow $\sheaf{K}$ with the minimal analytic structure, and note that, 
 by Theorem 
\ref{thm_coh_unique_anal_str},
$\psi_{VW}$ are analytic with respect to this structure.
On the other hand, $\sheaf{K}$ is coherent, and hence, by \cite[Theorem 4.3]{Lempert}, is cohesive. 
Now
$H^1(\Omega,\shom(\sheaf{O}^E,\sheaf{K}))=0$,
which is a special case of \cite[Theorem 9.1]{LP}.
Consequently,
$\psi_{VW}=\theta_V-\theta_W$ with some (analytic)
homomorphisms $\theta_V:\sheaf{O}^E|_V\rightarrow\sheaf{K}|_V$;
defining $\psi:\sheaf{O}^E\rightarrow\sheaf{O}^n$ by
\[
	\psi|_V=\psi_V-\theta_V,
\]
the resulting homomorphism satisfies $\phi=p\psi$.
\end{proof}


\bibliographystyle{amsalpha}
\bibliography{higas}
\end{document}